\documentclass{amsart}

\usepackage{amssymb}
\usepackage{bm}
\usepackage{graphicx}
\usepackage[centertags]{amsmath}
\usepackage{amsfonts}
\usepackage{amsthm}


\newtheorem{theorem}{Theorem}
\newtheorem*{mtheo}{Theorem}
\newtheorem{coroll}{Corollary}
\newtheorem{claim}{Claim}
\newtheorem{lemma}{Lemma}
\newtheorem{proposition}{Proposition}
\newtheorem{definition}{Definition}

\newtheorem{observation}{Observation}
\theoremstyle{definition}

\linespread{1.15}

         \begin{document}
 
\title[Canonical equivalence relations on fronts on $FIN_k$]{Canonical equivalence relations on fronts on $FIN_k$}

\author{ Dimitris VLITAS}

\address{Department of Mathematics, University of Toronto, 40 St. George Street, Toronto, Ontario, Canada M5S 2E4}
\email{vlitas@math.univ-paris-diderot.fr}

\begin{abstract}  We prove that for every equivalence relateion on a barrier  on the space $\langle FIN^{[\infty]}_k,\leq,r\rangle$, for any $k$, there exists $Y\in FIN_k^{[\infty]}$ so that the restriction of the coloring on $\langle Y\rangle$ is canonical.
\end{abstract}
\maketitle

\section{Introduction}

 Canonical results in Ramsey theory try to  describe equivalence relations in a given Ramsey structure,
based on the underlying pigeonhole principles. The first example of them is the classical Canonization
Theorem  by P. Erd\H{o}s and R. Rado \cite{Er-Ra} which can be presented as follows: Given $\alpha\le \beta
\le \omega$ let
$$\binom{\beta}{\alpha}:=\{f (\alpha)  \,:\, f:\alpha \rightarrow \beta \text{ is strictly increasing} \}.$$
The previous is commonly denoted by $[\beta]^{\alpha}$.   Then for any $n<\omega$  and any finite coloring of
$\binom{\omega}{n}$ there is an isomorphic copy $M$ of $\omega$ (i.e. the image of an strictly increasing
$f:\omega\rightarrow\omega$) and some
   $I\subseteq n(:=\{0,1,\dots,n-1\})$ such that any two  $n$-element subsets have the same color  if and only if they agree
  on the corresponding relative positions given by $I$.

This   was extended by P. Pudl\'ak and V. R\"odl in \cite{Pu-Ro}  for colorings of  a given \emph{uniform}
family $\mathcal{G}$ of finite subsets of $\omega$ by showing that given any
coloring of  $\mathcal{G}$ of finite subsets of $\omega$, there exists $A$ an infinite subset of $\omega$, a
uniform family $\mathcal{T}$ and a mapping $f:\mathcal{G}\to \mathcal{T}$ such that $f(X)\subseteq X$ for all
$X\in \mathcal{G}$ and such that any two $X,Y\in \mathcal{G}\upharpoonright A$ have the same color   if and
only if $f(X)=f(Y)$. Since then, many results of similar nature have been obtained (see \cite{Mil}, \cite{Vli}).

In 1992 G.T.Gowers \cite{Gow} obtained a stability result for real valued Lipschitz functions defined in the unit sphere $c_0$. This result is actually a consequence of a deep infinite dimensional Ramsey type result, which gives rise to a Ramsey space. In this paper we canonize equivalence relations on that space. To state our result we need to introduce some notions. 

 Given a positive integer $k$, let $FIN_k$ be the set of mappings $x:\omega \to \{0,1,\dots, k\}$, called $k$-vectors, whose support $suppx=\{n:x(n)\neq 0\}$ is finite and with $k$ in their range. An element $X$ of $FIN_k^{[\infty]}$ is a sequence $X=(x_n)_{n\in \omega}$ so that $\max supp x_n< \min supp x_{n+1}$, for all $n\in \omega$. We write $x_n<x_{n+1}$ to show that $\max supp x_n< \min supp x_{n+1}$.
 
 Let $T: FIN_k\to FIN_{k-1}$ be the map defined by $T(x)(n)=\max\{ x(n)-1,0\}$.
The $k$-combinatorial subspace $\langle X\rangle$ is the set of combinations of the form: $$T^{i_0}x_{n_0}+\dots + T^{i_m}x_{n_m}$$ with the condition that at least one $i_j=0$, $j<m+1$. By $T^i$ we mean $T^ix(n)=\max( x(n)-i,0)$, for $i>0$ and $T^0=id$. Given $X=(x_n)_{n\in l}$, we define the {\it{length}} of $X$, denoted by $|X|$, to be equal to $l$.
For $X=(x_n)_{n\in |X|}$, $Y=(y_n)_{n\in |Y|}\in FIN_k^{[\leq \infty]}$, set $X\leq Y$ if $x_n\in \langle Y\rangle$ for all $n<|X|$. In this case we say that $X$ is a $\it{ block-subsequence }$ of $Y$. Then $\leq$ is a partial ordering on $FIN_k^{[\infty]}$. 

For $X\in FIN^{[\infty]}_k$ we define $r_n(X)=(x_i)_{i\in n}$. We set $\mathcal{A}X_n=\{ r_n(Y): Y\leq X\}$ and $\mathcal{A}X=\cup_{n\in \omega} \mathcal{A}X_n$.
Next we introduce a topology on $ FIN^{[\infty]}_k$ with basic open sets as follows. For $s\in \mathcal{A}X_n$, by $|s|=n$ we denote is length. We define $$[s,X]=\{Y:Y\leq X, r_n(Y)=s\}.$$ These set form the basic sets for a topology on the space $FIN^{[\infty]}_k$.

 In \cite{To} it is shown that $\langle FIN^{[\infty]}_k,\leq,r\rangle$ satisfies axioms $A.1-A.4$, so it is a topological Ramsey space. Two corollaries of being such a space are the following.
 
 \begin{coroll}Let $c:\mathcal{A}X_n\to l$ be a finite coloring. There exists an $Y\leq X$ so that $c\upharpoonright \mathcal{A}Y_n$ is constant. \end{coroll}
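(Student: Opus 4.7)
The plan is to argue by induction on $n$, with Gowers' $FIN_k$ theorem (equivalently, axiom A.3 for the space $\langle FIN^{[\infty]}_k,\leq,r\rangle$) serving as the base case and a fusion argument powered by A.1--A.4 for the inductive step.

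First, for $n=1$, I observe that $\mathcal{A}X_1$ can be identified with the set of $k$-vectors admissible as the first block of some block subsequence of $X$; in particular every such vector lies in $\langle X\rangle$. A finite coloring of this set therefore falls within the scope of Gowers' theorem for $FIN_k$, which yields a block subsequence $Y\leq X$ such that $c\upharpoonright \mathcal{A}Y_1$ is constant.

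For the inductive step, suppose the result is known at level $n$, and let $c:\mathcal{A}X_{n+1}\to l$. The plan is to reduce to an induced coloring of $\mathcal{A}Z_n$ for a suitable $Z\leq X$. For each $s\in \mathcal{A}X_n$ I consider the $1$-dimensional coloring $c_s(x):=c(s^\smallfrown x)$, defined on those vectors $x$ in the tail of $X$ past $s$ that are admissible as an $(n{+}1)$-st coordinate. Enumerating the countable set of approximations as $(s_m)_{m\in\omega}$ and repeatedly applying the base case, I produce a descending chain $X\geq X_0\geq X_1\geq\cdots$ along which $c_{s_m}$ is constant on the tail of $X_m$. A standard fusion then yields a single $Z\leq X$ which, beyond each fixed finite initial segment, is a block subsequence of every $X_m$; consequently for each $s\in \mathcal{A}Z_n$ the coloring $c_s$ is constant on admissible extensions inside $Z$, with some value $d(s)$. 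Applying the inductive hypothesis to the derived coloring $d:\mathcal{A}Z_n\to l$ produces $Y\leq Z$ on which $d$ is constant, and by construction $c$ is then constant on $\mathcal{A}Y_{n+1}$.

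The main obstacle is making the fusion step rigorous: one has to diagonalise the sequence $(X_m)$ into an actual element $Z\in FIN_k^{[\infty]}$ with $Z\leq X$ that preserves the stabilisation achieved at every finite stage, so that the induced coloring $d$ is unambiguously defined on all of $\mathcal{A}Z_n$. This is precisely what the topological Ramsey space axioms A.1--A.4 are designed to supply, via an analogue of Mathias's diagonal construction for $\omega^{[\infty]}$ already verified for this space in \cite{To}. Once the fusion is in hand, the remaining bookkeeping---checking that the $Y$ produced by the inductive hypothesis really does make $c$ constant on every $(n{+}1)$-approximation---is routine.
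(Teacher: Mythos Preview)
Your proposal is correct; the paper does not give a separate proof but simply records this as a corollary of the fact, established in \cite{To}, that $\langle FIN^{[\infty]}_k,\leq,r\rangle$ is a topological Ramsey space, and your induction-plus-fusion sketch is exactly the standard argument underlying that general result. One small correction: the pigeonhole principle you invoke for $n=1$ is axiom A.4 in Todorcevic's numbering (as the paper itself notes), not A.3.
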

 and 
 \begin{coroll}Let $s\in \mathcal{A}X$ and $c:[s,X]\to l$ be a finite Suslin measurable coloring. There exists $Y\leq [s,X]$ so that $c\upharpoonright [s,Y]$ is constant.
 \end{coroll}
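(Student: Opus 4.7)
The plan is to derive this corollary as an immediate consequence of the abstract Ellentuck theorem for topological Ramsey spaces. Since the excerpt cites \cite{To} for the fact that $\langle FIN^{[\infty]}_k,\leq,r\rangle$ satisfies axioms A.1--A.4, the abstract Ellentuck theorem applies: every Suslin-measurable subset $\mathcal{X}\subseteq FIN^{[\infty]}_k$ is Ramsey, meaning that for every basic open set $[s,X]$ there exists $Y\in[s,X]$ with either $[s,Y]\subseteq\mathcal{X}$ or $[s,Y]\cap\mathcal{X}=\emptyset$. This single black-box fact is the only non-trivial ingredient; the finite-color statement is then extracted by a routine pigeonhole argument.

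Concretely, I would argue by induction on the number of colors $l$. The case $l=1$ is vacuous. For the inductive step, consider the color class $\mathcal{X}_0=c^{-1}(0)$, which is Suslin-measurable because $c$ is. Apply the abstract Ellentuck theorem to $\mathcal{X}_0$ and to the basic open set $[s,X]$ to obtain $Y_0\in[s,X]$ with $r_{|s|}(Y_0)=s$ such that either $[s,Y_0]\subseteq\mathcal{X}_0$ or $[s,Y_0]\cap\mathcal{X}_0=\emptyset$. In the first case $c\upharpoonright[s,Y_0]$ is constantly $0$ and we are done. In the second case $c$ restricted to $[s,Y_0]$ takes values in $\{1,\dots,l-1\}$, so by the inductive hypothesis applied to the coloring $c\upharpoonright[s,Y_0]$ we obtain $Y\leq Y_0$ with $s\in \mathcal{A}Y$ on which $c$ is constant.

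The step that actually requires work is the abstract Ellentuck theorem itself, but this is precisely what the axioms A.1--A.4 buy us via the general framework of \cite{To}, so no additional combinatorial input beyond that reference is needed here. The only mild bookkeeping obstacle is ensuring that each $Y_i$ produced preserves $s$ as an initial segment, which is automatic from the definition of $[s,X]$ since the abstract Ellentuck theorem is stated relative to such basic open sets.
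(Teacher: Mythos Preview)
Your proposal is correct and matches the paper's approach exactly: the paper does not give a separate proof of this corollary but simply states it as a direct consequence of $\langle FIN^{[\infty]}_k,\leq,r\rangle$ being a topological Ramsey space (axioms A.1--A.4 from \cite{To}), which is precisely the abstract Ellentuck theorem you invoke. Your induction on the number of colors is the standard reduction and adds only routine detail to what the paper leaves implicit.
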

 Recall that a map $f:X \to Y$ between two topological spaces is Suslin measurable, if the preimage $f^{-1}(U)$ of every open subset $U$ of $Y$ belong to the minimal $\sigma-$field of subsets of $X$ that contains its closed sets and it is closed under the Suslin operation \cite{Ke}. 
 
For a family of finite approximation of elements of $ FIN^{[\infty]}_k$ called fronts (see Definition $1$), we prove the following.

 \begin{mtheo} Let $f: \mathcal{F}\to \omega$ be a coloring of a front on $[\emptyset, X]$, for $X\in FIN_k^{[\infty]}$. There exists $Y_0\leq X$ so that $f\upharpoonright \mathcal{F}\upharpoonright \langle Y_0 \rangle$ is canonical.
 \end{mtheo}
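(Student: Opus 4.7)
The overall strategy is induction on the rank of the front $\mathcal{F}$, in the spirit of Pudl\'ak--R\"odl but adapted to the $T$-closed combinatorics of $FIN_k$. The first task is to fix the correct notion of ``canonical'': to each $s \in \mathcal{F}$ one should associate a canonical projection $\pi(s)$ recording both (i) which block-coordinates of $s$, viewed inside $\langle Y_0 \rangle$, actually influence $f(s)$, and (ii) the $T^i$-level carried by each retained coordinate. Two elements $s,t \in \mathcal{F}\upharpoonright \langle Y_0 \rangle$ should then receive equal color if and only if $\pi(s) = \pi(t)$. This generalises the classical subset-canonization and collapses to it when $k=1$ and all retained levels are $T^0$.

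For the base case, in which $\mathcal{F} = \mathcal{A}X_n$ for fixed $n$, I would iterate Corollary~1. Each $s \in \mathcal{A}X_n$ has only finitely many ``coordinate-plus-level'' slots, and for each such slot Gowers' pigeonhole supplies a block subsequence on which $f$ is either constant in that slot or depends on it injectively; fusing these finitely many stages using axiom~A.4 produces a single $Y \leq X$ realising the base canonical form. For the inductive step, given an admissible first block $s$ I would form the tail front $\mathcal{F}_s = \{t : s^{\smallfrown} t \in \mathcal{F}\}$, which has strictly smaller rank. Applying the inductive hypothesis along an A.4 fusion over all such $s$ (legitimised by Corollary~2, since $Y \mapsto$ canonical form on $\mathcal{F}_s \upharpoonright \langle Y \rangle$ is Suslin measurable) then yields some $Y^\ast \leq X$ on which every $\mathcal{F}_s$ is simultaneously canonical, with projection $\pi_s$. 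The induced first-block assignment $s \mapsto \pi_s$ is a coloring of $\mathcal{A}Y^\ast_1$ by countably many finite structured objects; reducing it to isomorphism type produces a finite coloring, to which Corollary~1 applies one more time, giving the combined canonical form on $\mathcal{F}$.

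The main obstacle is that the $T^i$-level attached to a coordinate is not intrinsic to $s$: elements of $\langle Y \rangle$ admit several decompositions $T^{i_0}y_{n_0} + \cdots + T^{i_m}y_{n_m}$, and passing from $Y$ to $Z \leq Y$ may shift the natural decomposition, so that a projection $\pi_s$ canonical over $\langle Y^\ast \rangle$ need not remain canonical over $\langle Y_0 \rangle$ for $Y_0 \leq Y^\ast$. To bypass this I would fix, once and for all, a canonical decomposition scheme for each $x \in \langle Y \rangle$ in terms of $Y$, and prove that this scheme is preserved under the block-subsequence operation, so that the $T^i$-data of $\pi_s$ transfers intact through every stage of the fusion. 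This tree-fusion step, reminiscent of the verifications of axioms~A.1--A.4 for $\langle FIN^{[\infty]}_k, \leq, r \rangle$ in \cite{To}, is where I expect the bulk of the technical work to lie.
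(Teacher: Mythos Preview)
Your approach is a genuine Pudl\'ak--R\"odl style rank induction, which is quite different from the paper's method; the paper does not induct on rank at all, but instead develops a mixing/separation dichotomy on pairs $s,t \in \hat{\mathcal{F}}$, shows one can pass to $Z \leq X$ deciding every pair, attaches to each node $s$ the Lopez-Abad staircase map $\phi_s$ governing one-step extensions, and then proves (via a long case analysis, Claims~1--4) that mixing forces $\phi_s = \phi_t$. The canonical $\phi$ is then glued from the $\phi_s$'s. A separate section treats the situation where mixing fails to be transitive, which the paper shows happens exactly when $f$ factors through addition of blocks.

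Your proposal has two substantive gaps. First, your notion of canonical is too narrow: you record ``which block-coordinates \dots\ influence $f(s)$'' together with ``the $T^i$-level carried by each retained coordinate'', but already for $n=1$ the canonical equivalence relations on $\langle X \rangle$ are the \emph{staircase} relations of Lopez-Abad (Theorem~1 in the paper), built from $\min_I$, $\max_I$, $\theta^\epsilon_{i,l}$ and their lattice closure --- these are not captured by ``constant or injective in a slot'', and your base case via iterated Corollary~1 cannot recover them (Corollary~1 is a finite pigeonhole, not a canonization). Second, and more seriously, your projection $\pi(s)$ selects coordinates of $s$ with $T$-levels, but the paper's example $c(s,t) = s+t$ on $\mathcal{A}X_2$ shows the canonical invariant can be a \emph{sum} $T^{i_0}t_{i_0} + \cdots + T^{i_l}t_{i_l}$ of several coordinates; no $\pi$ of your form witnesses this coloring. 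This is precisely the phenomenon behind non-transitive mixing in the paper, and it forces the final canonical map to take values in tuples of such sums (equation~(2) there), not in sub-tuples of $s$ with levels. Your ``main obstacle'' about decomposition stability is real but secondary; without widening the target of $\pi$ to allow sums and without invoking the one-dimensional staircase classification, the induction cannot close.
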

 
 Here by canonical, we mean that there exists a map $\phi$ so that for every $(t_0, \dots, t_{d-1})\in \mathcal{F}\upharpoonright \langle Y_0 \rangle$ $$\phi(t_0,\dots, t_{d-1})\subseteq (T^{i_0}t_{i_0}+\dots + T^{i_l}t_{i_l}, \dots,T^{j_0}t_{j_0}+ \dots + T^{j_m}t_{j_m} ),$$ where $\{i_0,\dots, i_l, \dots, j_0,\dots, j_m\}\subseteq  d$ and every $t_i$, $i<d$ appears in at most one combination in $\{T^{i_0}t_{i_0}+\dots + T^{i_l}t_{i_l}, \dots,T^{j_0}t_{j_0}+ \dots + T^{j_m}t_{j_m} \}\subset \langle X \rangle$. Then $\phi$ is so that for any $s,t\in \mathcal{F}$ it holds $f(s)=f(t)$ if and only if $\phi(s)=\phi(t)$. We view elements on $FIN_k$, the $k$-vectors, in the set theorytic way, as sets of ordered pairs. The subset is taken in this sense.
 The proof of the above theorem is divided in two parts. In the next section we add the necessary definitions and new concepts and we present the first part of the proof. In the final section we present the second part.

    \section{ Main theorem }

The above partial ordering $\leq $ on $FIN_k^{[\infty]}$, allows the finitization $\leq_{fin}$ as follows: for $X=(x_i)_{i\in h}$, $Y=(y_j)_{j\in m}$, we say that $X\leq_{fin} Y$ if and only if $X\leq Y$ and $(\forall l<m), X\nleq,Y\upharpoonright l$.

For $s\in \mathcal{A}X$ and $X\in FIN_k^{[\infty]}$ we define the depth of s in X as follows:
\begin{equation*}
depth_X(s)=
\begin{cases}
\min\{k: s \leq_{fin} r_k(X) \} & \text{ if } (\exists k) s \leq_{fin} r_k(X),\\
\infty & \text{otherwise.}
\end{cases}
\end{equation*}

Given an non empty basic open set $[s,X]$, $|s|=n$, let $$[s,X]_{n+1}=\{t\in \mathcal{A}X_{n+1}: s\sqsubseteq t \}.$$
Now we introduce the notion of a $\it{Front}$.

\begin{definition} A family $\mathcal{F}$ of finite approximations of reducts of $X$ is called a front, if for every $Y\leq X$, there exists $s\in \mathcal{F}$ so that $s=r_n(Y)$ and for any two distinct $s,t\in \mathcal{F}$, is not the case that $s\sqsubseteq t$.
\end{definition}
 
 We distinguish specific instances of fronts on $X$, the $\mathcal{A}X_n$. 
 Given a front $\mathcal{F}$ on $[\emptyset, X]$, we introduce $\hat{\mathcal{F}}$ defined as follows: $$\hat{\mathcal{F}}=\{ t\in \mathcal{A}X: \exists s\in \mathcal{F}, t\sqsubseteq s\}$$ observe that $\emptyset \in \hat{\mathcal{F}}$. For $t\in \hat{\mathcal{F}}\setminus \mathcal{F}$ $$\mathcal{F}_t=\{s\in \mathcal{F}: t\sqsubseteq s \}.$$
 
 For $Y\leq X$ $\mathcal{F}\upharpoonright  Y  =\{t\in \mathcal{F}: t\in \mathcal{A}Y', Y'\leq Y\},$
 
 $$\hat{\mathcal{F}}\upharpoonright Y  =\{t\in \hat{\mathcal{F}}: t\in \mathcal{A}Y', Y'\leq Y\}.$$
 
  Finally for $s\in  \mathcal{A}X$ and $X\in FIN_k^{[\infty]}$ by $X/s$ we denote $X\setminus s$. Similarly for $s,t\in  \mathcal{A}X$, by $X/(s,t)$ we denote $X\setminus s \cap X\setminus t$.

 The following proposition is a fact that holds in any topological Ramsey space. For the shake of completeness, we give a proof here in the context of our space.

\begin{proposition} Suppose the $\langle  FIN_k^{[\infty]}, \leq, r\rangle$ has the property that given a property $\mathcal{P}(\cdot, \cdot)$, $s\in \mathcal{A}X$ and $Y\leq X$, there exists $Z' \leq Y$ so that $\mathcal{P}(s,Z' )$. Then there exists $Z\leq X$ such that for any $s\in \mathcal{A}Z$ it holds that $\mathcal{P}(s,Z)$.

Similarly for properties of the form $\mathcal{P}(\cdot,\cdot,\cdot)$. If given $s,t\in \mathcal{A}X$ and $Y\leq X$, there exists $Z'\leq Y$ so that $\mathcal{P}(s,t,Z')$. Then there exists $Z\leq X$ so that $\mathcal{P}(s,t,Z)$ for all $s,t\in \mathcal{A}Z$.
\end{proposition}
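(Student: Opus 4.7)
The plan is to carry out a fusion (diagonalization) argument of the kind standard in topological Ramsey space theory, leveraging the fact that $\langle FIN_k^{[\infty]}, \leq, r\rangle$ satisfies axioms A.1--A.4 (as shown in \cite{To}). I would build a decreasing sequence $X \geq Z_0 \geq Z_1 \geq \dots$ together with the stabilization property $r_n(Z_m) = r_n(Z_n)$ for all $m \geq n$, maintaining the inductive invariant that $\mathcal{P}(s, Z_n)$ holds for every $s \in \mathcal{A}r_n(Z_n)$. The fused diagonal $Z$ defined by $r_n(Z) := r_n(Z_n)$ then satisfies $Z \leq Z_n$ for every $n$, and any $s \in \mathcal{A}Z$ of depth $n$ in $Z$ lies in $\mathcal{A}r_n(Z_n)$, so $\mathcal{P}(s, Z_n)$ holds; under the tacit standard assumption that $\mathcal{P}$ is hereditary under $\leq$, this passes to $\mathcal{P}(s, Z)$.

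For the inductive step, given $Z_{n-1}$ with $Z_{-1} := X$, I would enumerate the finitely many $s \in \mathcal{A}r_n(Z_{n-1})$ as $s_0, \dots, s_m$ and iteratively construct a chain $W_0 := Z_{n-1} \geq W_1 \geq \dots \geq W_{m+1} =: Z_n$ with $r_n(W_i) = r_n(Z_{n-1})$ throughout and $\mathcal{P}(s_i, W_{i+1})$ at step $i$. To pass from $W_i$ to $W_{i+1}$ I would apply the hypothesis inside the basic open set $[r_n(W_i), W_i]$---itself a topological Ramsey space by axiom A.3---to find $W_{i+1} \in [r_n(W_i), W_i]$ with $W_{i+1} \leq W_i$ and $\mathcal{P}(s_i, W_{i+1})$; heredity then keeps the earlier $\mathcal{P}(s_j, \cdot)$ for $j < i$ true at $W_{i+1}$ as well. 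The base step $n=0$ is the hypothesis applied to $(\emptyset, X)$.

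The three-variable version proceeds identically, except that at stage $n$ one enumerates pairs $(s,t) \in \mathcal{A}r_n(Z_{n-1}) \times \mathcal{A}r_n(Z_{n-1})$ and iterates the hypothesis once per pair; this remains a finite iteration since both factors are finite. The main obstacle, and the only place where the full topological Ramsey structure is genuinely needed, is the localization of the given hypothesis to a basic open set with prescribed initial segment; once that is in hand, the construction reduces to a finite iteration together with the hereditary closure of $\mathcal{P}$.
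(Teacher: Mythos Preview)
Your proposal is correct and follows essentially the same fusion/diagonalization approach as the paper: build a decreasing sequence with stabilizing initial segments, at each stage iterate the hypothesis over the finitely many $s$ (respectively pairs $(s,t)$) lying below the current initial segment, and take the diagonal. The paper's proof is organized with $t_{n+1}=r_{n+1}(X_n)$ and $A_n=\{z:z\leq_{fin} t_{n+1}\}$ playing the role of your $r_n(Z_{n-1})$ and $\mathcal{A}r_n(Z_{n-1})$, but the content is the same; if anything, you are more explicit than the paper about the tacit heredity of $\mathcal{P}$ and about preserving the initial segment when passing to the next reduct.
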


\begin{proof} 

Let $t_0=r_0(X)$ and $X$. There exists $X_0\leq X$ so that $\mathcal{P}(t_0,X_0)$. Set $t_1=r_1(X_0)$ and let $X_1\leq X_0$ so that $\mathcal{P}(t_1,X_1)$ holds. Set $t_2=r_2(X_1)$. Consider the finite set $A_2=\{ z \in \mathcal{A}X: z\leq_{fin} t_2 \}$. For every $z\in A_2$ there exists $Y\leq X_0$ so that $\mathcal{P}(z,Y)$. After considering all $z\in A_2$ we get $X_2\leq X_1$ and $t_3=r_3(X_2)$ so that $\mathcal{P}(z ,X_2)$ holds, for all $z\in A_2$. Suppose we have constructed $t_n$ and $X_n$. Set $t_{n+1}=r_{n+1}(X_n)$. Consider $A_{n}=\{z \in \mathcal{A}X: z\leq_{fin} t_{n+1}\}$. For every $z \in A_{n}$ there exists $Y\leq X_n$ so that $\mathcal{P}(z,Y)$. Therefore we get $X_{n+1}\leq X_n$ so that for any $z \in A_{n}$ we have $\mathcal{P}(z,X_{n+1})$. Set $t_{n+2}=r_{n+2}(X_{n+1})$. Proceed in that manner. Observe that for all $n\in \omega$ $t_n\sqsubset t_{n+1}$. Set $Z=\cup_{n\in \omega}t_n$. \\

 Now we prove similarly the second statement of our proposition. Let $t_0=r_0(X)$ and $t_1=r_1(X)$ and $X$. There exists $X_1\leq X$ so that $\mathcal{P}(t_0,t_1,X_1)$. Let $t_2=r_2(X_1)$. Consider the finite set $A_2=\{ z\in \mathcal{A}X: z\leq_{fin} t_2\}$. For any $(s,t)\in [A_2]^2$, there exists $Y\leq X_1$ so that $\mathcal{P}(s,t,Y)$. By exhausting all possible such a pairs we get $X_2\leq X_1$ such that for any $(s,t)\in [A_2]^2$ it holds that $\mathcal{P}(s,t,X_2)$. Set $t_3=r_3(X_2)$. Suppose we have constructed $t_n$ and $X_n$. Let $t_{n+1}=r_{n+1}(X_n)$ and $A_{n+1}=\{z\in \mathcal{A}X: z \leq_{fin} t_{n+1}\}$. For any pair $(s,t)\in [A_n]^2$ there exists $Y\leq X_n$ so that $\mathcal{P}(s,t,Y)$ holds. After considering all possible such a pairs, we get $X_{n+1}$ such that for any $(s,t)\in [A_{n+1}]^2$ it holds that $\mathcal{P}(s,t,X_{n+1})$. Set $t_{n+2}=r_{n+2}(X_{n+1})$. Observe that for every $n\in \omega$, $t_{n}\sqsubset t_{n+1}$. Let $Z=\cup_{n\in \omega} t_n$. 
\end{proof}

As mentioned above in \cite{To} is shown that $\langle FIN^{[\infty]}_k,\leq,r\rangle$ satisfies a pigeon hole property (axiom $A.4$ in \cite{To}).  
 To show that this space satisfies a strengthen pigeon hole property, (Theorem $1$ below), we introduce some concepts and definitions from \cite{Ab}.
We restrict our attention to the subset of $FIN_k$ that contains all the $k$-vectors that are system of staircases.

\begin{definition}Given an integer $i\in [1,k]$, let $\min_i,\max_i:FIN_k\to \omega$ be the mappings $\min_i x=\min_i x^{-1}(\{i\})$, $\max_i x=\max x^{-1}(\{i\})$, if defined and $0$ otherwise. A $k$-vector $x$ is a system of staircases (\it{sos} in short) if and only if
\begin{enumerate}
\item[1] Range $x=\{0,1,\dots ,k\}$,
\item[2] $\min_ix<\min_j x<\max_i x$, for $i<j\leq k$,
\item[3]  for every $1\leq i\leq k$,

\end{enumerate}
\begin{enumerate}
\item[] Range $x\upharpoonright [\min_{i-1}x,\min_i x)=\{0,\dots, i-1\}$,
\item[] Range $x\upharpoonright (\max_{i} x,\max_{i-1}x]=\{ 0,\dots, i-1\}$,
\item[] Range $x\upharpoonright [\min_k x, \max_k x]=\{0,\dots,k\}$.

\end{enumerate}

\end{definition}

An $X=(x_n)\in FIN_k^{[\infty]}$ is a system of staircases if and only if every $k$-vector $x_n$ is an sos. In \cite{Ab} it is shown that for every $X\in FIN_k^{[\infty]}$ there exists $Y\leq X$ so that $X$ is sos and that $T$ preserves sos. Next we introduce some mappings.

\begin{definition}
Let $X\in FIN_k^{[\infty]}$ be a sos. The mapping $\min_i$, for $i\in [1,k]$, can be interpreted as
 \begin{equation*}
\min_i(w) (n)=
\begin{cases}
i & \text{ if } n=\min_i(w),\\
0 & \text{ otherwise.}
\end{cases}
\end{equation*}
for $w\in \langle X\rangle$. Extending this, define for $I\subseteq \{1,\dots k\}$, the mapping $\min_I:\langle A\rangle_k\to FIN_{\max I}\subseteq FIN_{\leq k}$ by $\min_I(w)(n)=i$ if $n=\min_i(w)$, for $i\in I$ and $0$ otherwise, i.e. $\min_I(w)=\{ (\min_i(w),i): i\in I\}$, and extended by $0$. Similarly, let 
 \begin{equation*}
\max_i(w) (n)=
\begin{cases}
i & \text{ if } n=\max_i(w),\\
0 & \text{ otherwise.}
\end{cases}
\end{equation*}
and let $\max_I:FIN_k\to FIN_{\max_I}$ be defined by $\max_I(w)=\{ (\max_i(s),i):i\in I\}$, again extended by $0$. Clearly $\min_I=\bigvee_{i\in I} \min_i$ and $\max_I=\bigvee_{i\in I}\max_i$, where for two mappings $f,g:\langle X\rangle_k\to FIN_{\leq k}$ we define $(f\vee g)(w)=f(w)\vee g(w)$.

\end{definition}

For $l\leq i$, let $\theta^0_{i,l}:\langle X\rangle \to FIL_l$ be the mappings defined by 
\begin{enumerate}
\item[] $\theta^0_{i,l}(w)=\{(n,l):n\in (\min_{i}(w),\min_{i+1}(w)), w(n)=l\}$ extended by $0$,
\item[] $\theta^1_{i,l}(w)=\{(n,l):n\in (\max_{i+1}(w),\max_i(w)), w(n)=l\}$ extended by $0$,
\item[] $\theta^2_{l}(w)=\{(n,l):n\in (\min_k(w),\max_k(w)),w(n)=l\}$ extended by $0$, where $l\leq k$.
\end{enumerate}

\begin{definition}Let $\mathcal{G}_{\min}=\{\min_1,\dots, \min_k\}$, $\mathcal{G}_{\max}=\{ \max_1,\dots, \max_k\}$, $\mathcal{G}_{mid^\epsilon}=\{\theta^\epsilon_{i,l}:i\in \{1,\dots,k-1\}, l\in\{1,\dots,i-1\}\}$, for $\epsilon=0,1$ and $\mathcal{G}_{mid}=\{ \theta^2_l:l\in \{1,\dots, k\}\cup \{0\}$. Set $\mathcal{G}=\mathcal{G}_{\min}\cup \mathcal{G}_{\max} \cup \mathcal{G}_{mid^0} \cup \mathcal{G}_{mid^1} \cup \mathcal{G}_{mid}$.
\end{definition}

Given a $k$-block sequence $X$, we say that $f:\langle X \rangle \to FIN_{\leq k}$ is a staircase function if it is in the lattice closure of $\mathcal{G}$. An equivalence relation $R$ on $\langle X \rangle$ is a staircase relation if the following holds: $s R t$ if and only if $f(s)=f(t)$, for some staircase mapping $f$. In \cite{Ab} is shown that if $f$ is a staircase map then there are $I_{\epsilon}\subseteq \{1,\dots,k\}, J_{\epsilon}\subseteq \{j\in I_{\epsilon}:j-1\in I_{\epsilon}\}$, $(l_j^{(\epsilon)})_{j\in J_\epsilon}$ with $l_j^{(\epsilon)}\leq j-1$, for $\epsilon=0,1$ and $l_k^{(2)}$ such that 

$$f=:\min_{I_0}\vee \bigvee_{j\in J_0}\theta^0_{j-1,l_j^{(0)}}\vee \theta^2_{l_k^{(2)}}\vee \max_{I_1}\vee \bigvee_{j\in J_1}\theta^1_{j-1,l_j^{(1)}}.$$

As with the $k$-vectors, given two functions $f,g:\langle A\rangle \to FIN_{\leq k}$, we write $f<g$ to denote that $f(w)<g(w)$, for all $w\in \langle A\rangle$. Therefore any staircase mapping $f$ has a unique decomposition $f=\cup_{i\in n}f_i$ with $f_0<f_1<\dots <f_{n-1}$ in $\mathcal{G}$.

In \cite{Ab} J. Lopez-Abad showed the following.

\begin{theorem}For every $k$ and every equivalence relation on $FIN_k$ there is a system of staircases $B$ such that the equivalence relation restricted to $\langle B\rangle$ is a staircase equivalence relation.
\end{theorem}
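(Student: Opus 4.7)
The strategy is to canonize $E$ by successively stabilizing the interaction of $E$ with each elementary staircase map in $\mathcal{G}$. First, using the fact cited from \cite{Ab} that every $X\in FIN_k^{[\infty]}$ admits a block-subsequence which is a system of staircases and that $T$ preserves the sos property, I would replace the ambient block sequence by an sos one and work inside $\langle X\rangle$ throughout.

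The heart of the argument is Ramsey-theoretic. The set $\mathcal{G}$ is finite once $k$ is fixed, so I would fix an enumeration $g_0,\dots,g_{N-1}$ of $\mathcal{G}$. For each $g_i$ one defines a finite coloring of pairs of vectors of bounded depth in $FIN_k$ that records, modulo the values of $g_0,\dots,g_{i-1}$, whether agreement on $g_i$ is enough to force $E$-equivalence, or whether $g_i$ can be varied inside an $E$-class. Each such coloring can be made constant by the pigeonhole property (axiom A.4) of $\langle FIN^{[\infty]}_k,\leq,r\rangle$ established in \cite{To}. Diagonalizing over all finite patterns via Proposition 1 above, then performing one further reduction to an sos, produces a single block-subsequence $B\leq X$ on which every $g_i$ has a stable status: either $g_i$ is \emph{relevant}, meaning $g_i(s)\ne g_i(t)$ entails $\neg\,sEt$, or $g_i$ is \emph{irrelevant}, meaning its value may be altered inside $\langle B\rangle$ without leaving the $E$-class.

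Reading off the canonical form is then direct. Let $I_0,I_1\subseteq\{1,\dots,k\}$ record which $\min_i$, $\max_i$ are relevant, and let $J_0,J_1$, together with parameters $l_j^{(\varepsilon)}$ and $l_k^{(2)}$, record which middle maps $\theta^0_{j-1,l}$, $\theta^1_{j-1,l}$, $\theta^2_l$ are relevant. These data assemble into the staircase map
$$f=\min_{I_0}\vee \bigvee_{j\in J_0}\theta^0_{j-1,l_j^{(0)}}\vee \theta^2_{l_k^{(2)}}\vee \max_{I_1}\vee \bigvee_{j\in J_1}\theta^1_{j-1,l_j^{(1)}},$$
and the relation $sEt$ on $\langle B\rangle$ must coincide with $f(s)=f(t)$: if $f(s)=f(t)$ then $s$ and $t$ agree on every relevant elementary map, so a chain of substitutions along irrelevant coordinates connects them inside one $E$-class; conversely, if $f(s)\ne f(t)$ then some relevant $g_i$ separates them, so $\neg\,sEt$.

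The main obstacle is the bookkeeping required to make ``irrelevant coordinates can be freely varied'' precise, which is the reason that a single application of A.4 is insufficient. One needs, within $\langle B\rangle$, to be able to replace the value of each irrelevant $g_i$ on a vector by any compatible alternative coming from another vector in $\langle B\rangle$ with the same relevant-coordinate profile. This forces the stabilization to cover not just each $g_i$ in isolation but joint patterns of equalities and inequalities across finite tuples of vectors, uniformly in depth. Proposition 1 is exactly the tool that iterates single-pair stabilization into a simultaneous, all-pairs stabilization, while the passage to an sos block sequence is what makes the required substitutions geometrically available inside $\langle B\rangle$.
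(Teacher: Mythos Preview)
The paper does not contain a proof of this theorem. It is quoted as a result of Lopez-Abad, with the line ``In \cite{Ab} J.~Lopez-Abad showed the following'' immediately preceding the statement, and it is then used as a black box (the one-dimensional base case $\mathcal{F}=\mathcal{A}X_1$) in the proof of the paper's own Theorem~2. So there is no in-paper argument to compare your proposal against.

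As for the proposal itself: the overall shape---stabilize, coordinate by coordinate, whether each elementary map in $\mathcal{G}$ is ``relevant'' to $E$, then take the join of the relevant ones---is the standard canonization template and is the right intuition. But as written it is only a sketch, and the step you flag as ``the main obstacle'' is genuinely the whole content of the theorem. Two specific points would need real work. First, your dichotomy relevant/irrelevant is not well-posed as stated: for a fixed $g_i$, whether varying $g_i(s)$ can be done while staying in an $E$-class may depend on the values of the other coordinates, so one cannot simply two-color pairs according to ``$g_i$ decides $E$'' without first fixing a context; the actual argument has to stabilize joint patterns, not individual maps, and this is where the staircase structure (the nested intervals $[\min_i,\min_{i+1})$, $(\max_{i+1},\max_i]$) is essential rather than merely convenient. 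Second, the forward direction ``$f(s)=f(t)\Rightarrow sEt$'' requires exhibiting, inside $\langle B\rangle$, an explicit chain of substitutions along irrelevant coordinates; the sos hypothesis is what makes such substitutions available, but one must verify that each intermediate vector stays in $\langle B\rangle$ and that the substitutions can be done independently, which is nontrivial because the supports of the $\theta^\varepsilon_{i,l}$ overlap. The reference \cite{Ab} carries out exactly this analysis.
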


 Observe that the above theorem is the one dimentional case of our main theorem mentioned in the introduction. In other words it takes care of the front $\mathcal{A}X_1$, for $X\in FIN_k^{[\infty]}$.
  Next we make the following definition.
  
  \begin{definition}Given $\mathcal{F}$ a front on $[\emptyset, X]$ and $f:\mathcal{F}\to \omega$. Fix $s,t\in \hat{\mathcal{F}}\setminus \mathcal{F}$ and $X$. $X$ separates $s$ and $t$ if and only if for all $w\in \langle X/s \rangle $ and $v\in\langle  X/t\rangle $ so that $s\cup w,t\cup v\in \mathcal{F}\upharpoonright X$, $f(s\cup w)\neq f(t\cup v)$. $X$ mixes $s$ with $t$, if there is no $Y\leq X$ which separates $s$ with $t$. $X$ decides for $s$ with $t$ if and only if either $X$ mixes $s$ with $t$, or $X$ separates $s$ with $t$.
\end{definition}

Therefore $X$ mixes $s$ with $t$ if and only if for each $Y\leq X$, there are $w\in \langle Y/s\rangle $ and $v \in \langle Y/t\rangle$ so that $s\cup w, t\cup v\in \mathcal{F}\upharpoonright Y$ and $f(s\cup w)=f(t\cup v)$.

The following proposition follows directly from the definitions.

\begin{proposition} The following hold.
\begin{enumerate}
\item{} If $X$ mixes (separates) $s$ with $t$, so does any reduct $Y\leq X$.
\item{} For every $s,t\in \hat{\mathcal{F}}\setminus \mathcal{F}$ if for any $w\in [s,X]_{|s|+1}$ there exists $v\in [t,X]_{|t|+1}$ so that $X$ mixes $s\cup w$ with $t\cup v$, then $X$ also mixes $s$ with $t$.
\end{enumerate}

\end{proposition}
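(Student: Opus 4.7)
Plan: Both parts of the proposition should come out by directly unpacking the definitions of mixing and separation, with transitivity of $\leq$ as the main tool. For (1), I would note that if $Y \leq X$ then $\langle Y/s\rangle \subseteq \langle X/s\rangle$, $\langle Y/t\rangle \subseteq \langle X/t\rangle$ and $\mathcal{F}\upharpoonright Y \subseteq \mathcal{F}\upharpoonright X$, so the universally quantified clause defining ``$X$ separates $s$ from $t$'' is inherited by $Y$. Dually, for mixing, any hypothetical separator $Z \leq Y$ is also $\leq X$ by transitivity, so if no reduct of $X$ separates $s$ from $t$, neither does any reduct of $Y$; hence $Y$ mixes $s$ with $t$.

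For (2), I would argue by contrapositive. Assume $X$ does not mix $s$ with $t$ and pick a separator $Y \leq X$. Choose any $w \in [s,Y]_{|s|+1} \subseteq [s,X]_{|s|+1}$. The plan is to show that this single $Y$ already separates $s\cup w$ from $t\cup v$ for \emph{every} $v \in [t,X]_{|t|+1}$, which contradicts the hypothesis at the chosen $w$. Split on whether $v \in \langle Y/t\rangle$. In the positive case, any $p \in \langle Y/(s\cup w)\rangle$ and $q \in \langle Y/(t\cup v)\rangle$ with $s\cup w\cup p,\ t\cup v\cup q \in \mathcal{F}\upharpoonright Y$ assemble into $w\cup p \in \langle Y/s\rangle$ and $v\cup q \in \langle Y/t\rangle$ landing in $\mathcal{F}\upharpoonright Y$, and then the $Y$-separation of $s$ from $t$ forces $f(s\cup w\cup p) \neq f(t\cup v\cup q)$. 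In the other case, no $Z \leq Y$ can have $t\cup v \in \mathcal{A}Z$ (since $\langle Z\rangle \subseteq \langle Y\rangle$ and $v \notin \langle Y\rangle$), so the universal clause in the definition of $Y$-separation of $s\cup w$ from $t\cup v$ is vacuously satisfied.

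The one point to be careful about is this vacuous case: I have to observe that the separation condition is automatically true whenever $t\cup v$ admits no extensions realised inside $\langle Y\rangle$, and that this is exactly what happens when $v\notin \langle Y\rangle$. With that subtlety noted, both parts of the proposition reduce to a direct inspection of the definitions together with transitivity of $\leq$, which matches the author's remark that the proposition ``follows directly from the definitions''.
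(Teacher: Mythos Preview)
Your proposal is correct and aligns with the paper, which omits the proof entirely and simply states that the proposition ``follows directly from the definitions.'' Your unpacking via transitivity of $\leq$ for part (1) and the contrapositive argument with the vacuous-separation observation for part (2) is precisely the kind of direct verification the author has in mind; indeed, the key step you isolate for (2) --- that a separator $Y$ of $s$ and $t$ also separates $s\cup w$ from $t\cup v$ --- reappears later in the paper as Lemma~2(2).
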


Next we observe the following.

\begin{proposition}Given $X$ and a front $\mathcal{F}$ on $[\emptyset, X]$, there exists $Z\leq X$ so that for all $s,t\in\hat{ \mathcal{F}}\upharpoonright Z$, $Z$ decides $s$ with $t$.
\end{proposition}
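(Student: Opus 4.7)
The plan is to apply the second (ternary) part of Proposition 1 to the property
$$\mathcal{P}(s,t,Z) := \text{``}Z \text{ decides } s \text{ with } t\text{''}.$$
So the main content of the proof is just to verify the hypothesis of Proposition 1, namely: given $s,t\in \mathcal{A}X$ and any $Y\leq X$, one can find $Z'\leq Y$ such that $\mathcal{P}(s,t,Z')$ holds.

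The verification is immediate from the very definition of \emph{decide}. Indeed, either there exists some $Z'\leq Y$ which separates $s$ with $t$, in which case $Z'$ itself decides $s$ with $t$ and we are done, or no reduct of $Y$ separates $s$ with $t$; but by definition this is precisely the statement that $Y$ mixes $s$ with $t$, so $Y$ decides $s$ with $t$ and we may take $Z'=Y$. (For pairs $s,t\in \mathcal{A}X$ not lying in $\hat{\mathcal{F}}\setminus \mathcal{F}$, we set $\mathcal{P}(s,t,Z)$ to hold vacuously, so the hypothesis of Proposition 1 is trivially satisfied for them.)

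Having checked the hypothesis, Proposition 1 yields $Z\leq X$ such that $\mathcal{P}(s,t,Z)$ holds for every pair $s,t\in \mathcal{A}Z$. Since $\hat{\mathcal{F}}\upharpoonright Z \subseteq \mathcal{A}Z$, this gives in particular that $Z$ decides $s$ with $t$ for all $s,t\in \hat{\mathcal{F}}\upharpoonright Z$, which is exactly the claim. The only step that needs any thought at all is the closure observation (Proposition 2(1)) that $\mathcal{P}$ is preserved when one passes to a reduct; this is what ensures the diagonal construction of Proposition 1 delivers a single $Z$ that works simultaneously for all pairs from $\hat{\mathcal{F}}\upharpoonright Z$, rather than merely for the finitely many pairs considered at each stage of the fusion.
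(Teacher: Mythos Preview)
Your argument is correct, and in fact it is more elementary than the paper's own proof. Both proofs reduce the proposition to verifying the hypothesis of the ternary part of Proposition~1 for the property $\mathcal{P}(s,t,Z)=$ ``$Z$ decides $s$ with $t$'', and both then invoke Proposition~1 to obtain a single $Z$ working for all pairs. The difference lies in how the hypothesis is checked. You observe that the dichotomy is already built into Definition~5: given $Y$, either some $Z'\leq Y$ separates $s$ from $t$, or no reduct does, which is precisely the definition of ``$Y$ mixes $s$ with $t$''. The paper instead defines a two-coloring of the basic open set $[t,Y]$ and appeals to Corollary~2 (the abstract Ellentuck theorem for Suslin measurable colorings) to homogenize it. Your route avoids any measurability verification and any use of the infinite-dimensional Ramsey property, so it is strictly lighter; the paper's route is not wrong, just heavier machinery than the definition requires. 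Your remark that Proposition~2(1) (hereditariness of mixing/separating under reducts) is what makes the fusion in Proposition~1 succeed is exactly the point that needs to be made explicit, and you have done so.
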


\begin{proof} Given $s,t$ and $Y\leq X$ it suffices to show that there exists $Z\leq Y$ which decides for $s$ and $t$. Then the statement of the this proposition will follow from Proposition $1$ and the property $\mathcal{P}(s,t,Y)$ stating that $Y$ decides for $s$ and $t$. Assume that $depth_X(s) \leq depth_X(t)$ and consider the two-coloring: $c':[t, Y]\to 2$ defined by 
\begin{equation*}
c'(Y')=
\begin{cases}
1 & \text{ if } \exists w\in\langle  Y'\rangle , v\in \langle Y' \rangle \text { so that }  Y' \text{ mixes } s\cup w \text{ with } t\cup v,\\
0 & \text{ otherwise.}
\end{cases}
\end{equation*}

Corollary $2$, provides us with $Z$ that either mixes $s$ with $t$, in the case that $c'\upharpoonright [t, Z]=1$, or separates them, in the case that $c'\upharpoonright [t, Z]=0$.

\end{proof}

 The above notion of mixing induced by Definition $5$ is not necessarily transitive. 
 An example of such an equivalence relation is the following. Let $X=(x_n)_{n\in \omega}\in FIN_1^{[\infty]}$ and $c:\mathcal{A}X_2\to \omega$, defined as follows $c(s,t)=s+t=s\cup t$. Let $s=x_0$, $t=x_0+x_2$ and $p=x_0+x_1+x_2$. Observe that the depth of $s,t$ and $p$ in $X$ is not the same. Then $X$ mixes $s$ with $t$ and $s$ with $p$, but $X$ does not  mixes $t$ with $p$. The above example generalizes easily for any $k$.

In the case of the same depth, mixing is transitive as the next lemma shows.

\begin{lemma}Let $s,t,p\in \mathcal{A}X_n$, with $depth_X(s)=depth_X(t)=depth_X(p)$. If $X$ mixes $s$ with $t$ and $X$ mixes $t$ with $p$, then $X$ mixes $s$ with $p$.
\end{lemma}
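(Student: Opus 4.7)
My plan is to argue by contradiction: suppose $X$ mixes $s$ with $t$ and $t$ with $p$ but does not mix $s$ with $p$. By Proposition~3, or its proof applied to the single pair $(s,p)$, one obtains $Y\le X$ deciding for $s$ with $p$; since $X$ itself fails to mix them, no reduct of $X$ can mix them, so $Y$ must separate $s$ with $p$. Without loss of generality one may choose $Y$ so that its initial segment up to depth $d=depth_X(s)=depth_X(t)=depth_X(p)$ coincides with $r_d(X)$, which guarantees $depth_Y(s)=depth_Y(t)=depth_Y(p)=d$ and hence $Y/s=Y/t=Y/p$. By Proposition~2(1), $Y$ continues to mix $s$ with $t$ and $t$ with $p$.

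Next I introduce a finite coloring $c:\mathcal{F}_t\upharpoonright Y\to\{0,1,2\}$ by setting $c(t\cup v)=0$ if there exists $w\in\langle Y/s\rangle$ with $s\cup w\in\mathcal{F}\upharpoonright Y$ and $f(s\cup w)=f(t\cup v)$; $c(t\cup v)=1$ if this fails but some $u\in\langle Y/p\rangle$ with $p\cup u\in\mathcal{F}\upharpoonright Y$ satisfies $f(p\cup u)=f(t\cup v)$; and $c(t\cup v)=2$ otherwise. The separation of $s$ with $p$ in $Y$ forces these clauses to be mutually exclusive: simultaneous witnesses $w$ and $u$ would give $f(s\cup w)=f(p\cup u)$, contradicting separation.

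I then appeal to the front pigeonhole available in any topological Ramsey space. Since $\mathcal{F}_t$ is a front on $[t,Y]$, the Borel coloring $\tilde c:[t,Y]\to\{0,1,2\}$ defined by $\tilde c(Y')=c(r_{\mathcal{F}_t}(Y'))$, where $r_{\mathcal{F}_t}(Y')$ is the unique initial segment of $Y'$ lying in $\mathcal{F}_t$, is Suslin measurable. Corollary~2 provides $Z\le Y$ with $r_{|t|}(Z)=t$ on which $c$ is constant on $\mathcal{F}_t\upharpoonright Z$. A three-way case analysis closes the argument. If $c\equiv 0$, then for every $t\cup v\in\mathcal{F}_t\upharpoonright Z$ and every $u\in\langle Z/p\rangle\subseteq\langle Y/p\rangle$ with $p\cup u\in\mathcal{F}\upharpoonright Z$ we have $f(p\cup u)\ne f(t\cup v)$, so $Z$ separates $t$ with $p$, contradicting the mixing of $t$ with $p$ in $Y$ together with Proposition~2(1). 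If $c\equiv 1$ or $c\equiv 2$, the analogous computation shows that no $w$ ever matches any $t\cup v\in\mathcal{F}_t\upharpoonright Z$ in $f$-value, so $Z$ separates $s$ with $t$, again a contradiction. Each case yields the desired contradiction, so $X$ mixes $s$ with $p$.

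The main obstacle I anticipate is the invocation of the front pigeonhole for $c$, and in particular verifying that its Borel lift to $[t,Y]$ is Suslin measurable so that Corollary~2 applies; this is straightforward because the defining conditions only quantify over witnesses inside the countable structure $Y$. The equal-depth hypothesis enters crucially in the identification $\langle Y/s\rangle=\langle Y/t\rangle=\langle Y/p\rangle$, which is what allows the three witness domains to be compared directly in the case analysis; without it, as the example preceding the lemma shows, transitivity genuinely fails and this step cannot be avoided.
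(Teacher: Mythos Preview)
Your argument is sound and takes a genuinely different route from the paper. The paper works with one-step extensions: it two-colours $[p,X]_{n+1}$ according to whether $X$ mixes $p\cup w$ with some $t\cup v\in[t,X]_{n+1}$, applies Corollary~1 to pass to $Y\in[p,X]$ where the colour is identically~$1$, repeats on $[t,Y]_{n+1}$ relative to $s$, and then concludes via Proposition~2(2) that the resulting $Z$ mixes $s$ with $p$. You bypass this level-by-level machinery: you three-colour the full front $\mathcal{F}_t\upharpoonright Y$ by whether $f(\bar t)$ is realised on $\mathcal{F}_s\upharpoonright Y$, on $\mathcal{F}_p\upharpoonright Y$, or on neither, apply Corollary~2 once, and read off from the constant colour that $Z$ separates one of the two pairs assumed mixed. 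Your route is more self-contained---no Proposition~2(2), no implicit recursion through $\hat{\mathcal{F}}$---while the paper's stays with the finitary pigeonhole (Corollary~1) and matches the inductive style used to assemble $\phi$ later in the section.

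One detail needs tightening. Corollary~2 as you invoke it returns $Z\in[t,Y]$, and nothing forces $s,p\in\mathcal{A}Z$; without that, Proposition~2(1) does not push the mixing of $(s,t)$ and of $(t,p)$ down to $Z$, and your contradiction does not close. The remedy uses exactly the equal-depth hypothesis you already isolated: since $Y/s=Y/t=Y/p$, the family $\{v:t\cup v\in\mathcal{F}\}$ is a front on the common tail $Y/r_d(Y)$; homogenize it there to obtain $W\le Y/r_d(Y)$, and take $Z$ to be $r_d(Y)$ followed by $W$. Then $r_d(Z)=r_d(Y)$, so $s,t,p\in\mathcal{A}Z$, and $Z/s=Z/t=Z/p=W$, so your case analysis runs verbatim on extensions drawn from $\langle W\rangle$. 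The paper's own proof, passing first into $[p,X]$ and then into $[t,Y]$, is equally informal on this point.
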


\begin{proof} Suppose that $X$ mixes $s$ with $t$ and $X$ mixes $t$ with $p$, but $X$ separates $s$ with $p$. Consider the two-coloring $c_1:[p, X]_{n+1} \to 2$ defined by 
\begin{equation*}
c_1(p\cup w)=
\begin{cases}
1 & \text{ if } \exists t\cup v \in [t,X]_{n+1}, \text{ and } $X$ \text{ mixes } p\cup w \text { with } t\cup v,\\
0 & \text{ otherwise.}
\end{cases}
\end{equation*}
Corollary $1$, gives us a $Y\in [p, X]$ so that $c_1\upharpoonright [p,Y]_{n+1} =1$. Similarly we consider the two-coloring $c_2:[t,Y]_{n+1}\to 2$ defined by:
\begin{equation*}
c_2(t\cup v)=
\begin{cases}
1 & \text{ if } \exists s\cup v'\in [s,Y]_{n+1}, \text{ and } $Y$ \text{ mixes } t\cup v \text { with } s\cup v',\\
0 & \text{ otherwise.}
\end{cases}
\end{equation*}

 which gives us a $Z\in [t, Y]$ so that $c_2\upharpoonright [t,Z]_{n+1}=1$. But this implies that $Z\leq X$ mixes $s$ with $p$, a contradiction.

\end{proof}

Now we are ready to state and prove our main theorem.
 
 \begin{theorem} Let $f: \mathcal{F}\to \omega$ be a coloring of a front on $ X$, for $X\in FIN_k^{[\infty]}$. There exists $Y_0\leq X$ so that $f\upharpoonright \mathcal{F}\upharpoonright  Y_0 $ is canonical.
 \end{theorem}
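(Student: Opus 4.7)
The plan is to canonize by induction along the tree $\hat{\mathcal{F}}$, using Lopez-Abad's one-dimensional canonization (Theorem 1) as the engine at each level and Proposition 1 as the fusion device. First, I would apply Proposition 3 to pass to $Y_1 \leq X$ deciding every pair $s,t \in \hat{\mathcal{F}}\upharpoonright Y_1$. By Lemma 1, mixing becomes an equivalence relation $\sim$ on each fixed depth level of $\hat{\mathcal{F}}\upharpoonright Y_1$, and separation is its complement. The aim is then to show that $\sim$ is captured by a canonical projection $\phi$ built from the $T^i t_j$'s, whose shape will be supplied by Theorem 1 applied level by level.

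The next step is a one-step canonization followed by shape uniformization. For each $s \in \hat{\mathcal{F}}\setminus \mathcal{F}$, the relation defined by $w \sim_s w'$ iff $s\cup w \sim s\cup w'$ is an equivalence relation on the single $k$-vectors $w \in \langle Y_1/s\rangle$ with $s\cup w \in \hat{\mathcal{F}}$, so Theorem 1 furnishes $Z_s \leq Y_1/s$ on which $\sim_s$ is a staircase equivalence, represented by a staircase map $f_s$ in the lattice closure of $\mathcal{G}$. Applying Proposition 1 to the property ``$\sim_s$ is a staircase equivalence on $\langle Y/s\rangle$'' yields $Y_2 \leq Y_1$ canonizing $\sim_s$ simultaneously for every $s \in \hat{\mathcal{F}}\upharpoonright Y_2$. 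A further refinement, using Corollary 1 together with another iteration of Proposition 1, produces $Y_3 \leq Y_2$ on which the shape of the staircase decomposition of $f_s$, namely the indices $(I_\epsilon, J_\epsilon, l_j^{(\epsilon)}, l_k^{(2)})$, depends only on $|s|$.

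With these uniform staircase data in hand I would assemble $\phi$ level by level. For $t = (t_0,\ldots,t_{d-1}) \in \mathcal{F}\upharpoonright Y_3$, each level's staircase map selects some combination of the $T^i t_j$'s recording the information that determines the $\sim$-class of the corresponding prefix of $t$; the full $\phi(t)$ is the tuple obtained by listing these combinations in order. Since $\sim$ restricted to $\mathcal{F}$ coincides with $f$-equality, establishing $\phi(s)=\phi(t) \iff s\sim t$ will yield the desired biconditional $f(s)=f(t) \iff \phi(s)=\phi(t)$.

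The main obstacle is this final coherence step: arranging that the pointwise canonizations at successive levels combine consistently into a single tuple-valued map satisfying the no-reuse property (each $t_i$ appearing in at most one combination), and verifying that $\sim$ is genuinely decoded by $\phi$ rather than by some coarser invariant. This will require a delicate induction on the rank of the front, repeatedly invoking Proposition 2(2) to propagate mixing information from deeper levels back to shallower ones, and is presumably what occupies the second part of the proof announced after the statement of the main theorem.
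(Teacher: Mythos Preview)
Your outline matches the paper's architecture at the coarse level---decide all pairs, canonize each node $s\in\hat{\mathcal{F}}\setminus\mathcal{F}$ via Theorem~1 to get a staircase map $\phi_s$, fuse with Proposition~1, then assemble $\phi$ from the $\phi_s$'s---but there are two genuine gaps.

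First, the step you call ``shape uniformization'' (forcing the decomposition indices of $\phi_s$ to depend only on $|s|$) is not what is needed and does not by itself yield the biconditional. Knowing that $\phi_s$ and $\phi_t$ are the \emph{same function} for all $s,t$ of equal length still does not tell you that when $X$ mixes $s$ with $t$ and $s\cup w$ with $t\cup v$, one has $\phi_s(w)=\phi_t(v)$; the level-wise canonizations only control $s\cup w$ versus $s\cup w'$ and $t\cup v$ versus $t\cup v'$ separately. The paper's actual work here is the coloring $c'$ and the elaborate sequence of Claims~1--4: one must show that if $X$ mixes $s$ with $t$ then, after further refinement, $X$ mixes $s\cup w$ with $t\cup w$ for \emph{every} $w$, i.e.\ the mixing is ``diagonal''. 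This is proved by contradiction, constructing a separating reduct via careful staircase additions (the $\bar w$, $w'$ of Claims~2--4). Without this, the implication $f(s)=f(t)\Rightarrow\phi(s)=\phi(t)$ fails.

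Second, the non-transitive case is not a coherence bookkeeping issue to be absorbed into an induction on rank. The paper gives an explicit example where mixing is non-transitive even on $\mathcal{A}X_2$, and shows that this happens precisely when $f$ factors through addition: there exist $s,t$ and $w^t_s\in t\setminus s$ so that $X$ mixes $s$ with $t$ only via extensions $s\cup(w^t_s+v)$. This forces the canonical map to take the extended form $\phi(t_0,\dots,t_{d-1})\subseteq(T^{i_0}t_{i_0}+\cdots+T^{i_l}t_{i_l},\dots)$ with genuine sums across coordinates, and requires the separate notion of \emph{weak mixing} (Definition~6) and Claim~5 to control it. Your proposed $\phi$, built purely from level-wise staircase selections, cannot produce such sums and so cannot be canonical in this regime.
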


  Let $\mathcal{F}$ be any front on $\langle X\rangle$ and $f:\mathcal{F}\to \omega$ any coloring. Then Theorem $1$ looks after the case of front $\mathcal{F}=\mathcal{A}X_1$. Consider an arbitrary front $\mathcal{F}$. Definition $5$ gives us the notion of separation and mixing. We are going to divide the proof in two parts. In the first part we assume that mixing is transitive. In the second part we deal with the case that mixing is not transitive.  In the first case we are going to obtain a map $\phi$ , $\phi(t_0,\dots, t_{d-1})\subseteq (t_0, \dots, t_{d-1})$ so that for any $s,t\in \mathcal{F}$ it holds that $f(s)=f(t)$ if and only if $\phi(s)=\phi(t)$. In the second case we obtain a map $\phi$ so that $\phi(t_0,\dots, t_{d-1})\subseteq (T^{i_0}t_{i_0}+\dots + T^{i_l}t_{i_l}, \dots,T^{j_0}t_{j_0}+ \dots + T^{j_m}t_{j_m} )$, $\{i_0,\dots, i_l, \dots, j_0,\dots, j_m\}\subseteq d$,  and each $t_i$, $i<d$ appears in at most one combination in $\{ T^{i_0}t_{i_0}+\dots + T^{i_l}t_{i_l}, \dots,T^{j_0}t_{j_0}+ \dots + T^{j_m}t_{j_m}\}\subset \langle X \rangle$. Then $\phi$ has the property that for any $s,t\in \mathcal{F}$ it holds $f(s)=f(t)$ if and only if $\phi(s)=\phi(t)$.

   Assume that mixing is transitive. If $s=(s_0, \dots, s_{n_1})$ is an $n$-tuple and $w,v\in FIN_k$ are length one extensions of $s$, by $s\cup w$ we denote the $n+1$-tuple that extends $s$ by $w$. From now on for notational simplicity when we write $s\cup w$ we mean that $s\cup \{w\}=(s_0,\dots, s_{n_1},w)$ and when we write $s\cup w+v$ we mean the $n+1$-tuple $(s_0,\dots, s_{n_1}, w+v)$. For any $s\in \hat{ \mathcal{F}}\setminus \mathcal{F}$, $|s|=n$ and $X\in FIN_k^{[\infty]}$ with $[s,X]\neq \emptyset$, Theorem $1$ provides us with a $Y\in [s, X]$ and  $\phi_s$ so that for $s\cup w, s\cup v \in [s,Y]_{n+1}$, $Y$ mixes $ s\cup w$ with $ t\cup v$ if and only if $\phi_s(w)=\phi_t(v)$. 
   This is done by considering the equivalence relation $c:[s,X]_{n+1}\to \omega$, on $\langle X\rangle $, defined by $c(w)=c(v)$ if and only if $X$ mixes $s\cup w$ with $s\cup v$. By Proposition $1$, we can assume that for every $t \in \hat{ \mathcal{F}}\setminus \mathcal{F}\upharpoonright X$, there exists a staircase map $\phi_t$ which induces an equivalence relation on $[t,X]_{n+1}$.

Assume that $X$ mixes $s$ with $t$, $s,t\in \mathcal{A}X_n$, and consider the two-coloring $c': [t,X]_{n+1}\to 2$ defined by
\begin{equation*}
c'(t \cup w)=
\begin{cases}
1 & \text{ if }  Y \text{ mixes } t\cup w \text{ with } s\cup w \text{ and } \phi_t(w)=\phi_s(w),\\
0 & \text{ otherwise.}
\end{cases}
\end{equation*}

The fact that $\langle FIN_k^{[\infty]}, \leq, r\rangle$ is a topological Ramsey space gives us a $Z \leq Y$ where $c'\upharpoonright [t,Z]_{n+1}$ is constant. If the constant value is equal to one, then on $Z$ we have that for every $t\cup w \in [t,Z]_{n+1}$, Z mixes $t\cup w$ with $s\cup w$ and also $\phi_t(w)=\phi_s(w)$.

Let $c'\upharpoonright [t,Z]_{n+1}=0$. Then for every $t\cup w\in [t,Z]_{n+1}$ either there exists $s\cup v\in [s,Z]_{n+1}$ where $Z$ mixes $t\cup w$ with $s\cup v$ and $\phi_s(v)\neq \phi_t(w)$ or there is no $s\cup v\in [s,Z]_{n+1}$ that $Z$ mixes it with $t\cup w$. This allows to consider the two coloring: $c_1:[t,Z]_{n+1}\to 2$ defined by:
\begin{equation*}
c_1(t\cup w)=
\begin{cases}
1 & \text{ if there exists } s\cup v\in [s,Z]_{n+1},  Z \text{ mixes } t\cup w \text{ with } s\cup v,\\&  \phi_s(v)\neq \phi_t(w),\\
0 & \text{ otherwise.}
\end{cases}
\end{equation*}
 Once more there exists $Z_1\in [t,Z]$ so that $c_1\upharpoonright [t,Z_1]$ is constant. If the constant value is equal to zero, then $Z_1$ separates $s$ with $t$, a  contradiction to the assumption that $Z$ mixes $s$ with $t$. Suppose that $c_1\upharpoonright [t,Z_1]=1$. 
Then for every $t\cup w\in [t,Z_1]_{n+1}$ there exists $s\cup v \in [s,Z_1]_{n+1}$ so that $Z_1$ mixes $t\cup w$ with $s\cup v$ and $\phi_s(v)\neq \phi_t(w)$. Observe that it might be the case that $\phi_s=\phi_t$ and for every $t \cup w\in [t,Z_1]_{n+1}$, there exists $s\cup v\in [s,Z_1]_{n+1}$, $w\neq v$, so that $Z_1$ mixes $t\cup w$ with $s\cup v$ and even though $\phi_s=\phi_t$, $\phi_s(v)\neq\phi_t(w)$ cause $w\neq v$. The assumption that $\phi_s=\phi_t$ rules out the possibility of $Z_1$ mixing $t\cup w$ with $s\cup w$, for any $w\in \langle Z_1\rangle$. 

 Let $\phi_s=\cup_{i\in \alpha} f_i$, $\phi_t=\cup_{j\in \beta} g_j$, $\alpha, \beta \in k+1$, and $\mathcal{X} \subseteq \alpha, \mathcal{Y}\subset \beta$, $|\mathcal{X}|=|\mathcal{Y}|$, so that for every $i\in \mathcal{X}$ there $j\in \mathcal{Y}$ so that $f_i=g_j$. 
Let $\phi'_s=\cup_{i\in \alpha \setminus \mathcal{X}}f_i$ and $\phi'_t=\cup_{j\in\beta \setminus \mathcal{Y}}g_j$. 

For every $w\in \langle Z_1\rangle$ consider the finite sets: \\

$A^0_w=\{T^iw, i<k:  \phi'_s(T^iw+ v)=\phi'_s(T^iw+v') \text{ and } \phi'_t(T^iw+ v)=\phi'_t(T^iw+v') \text{ for all }v', v\in \langle Z_1/w\rangle\}$.\\

Notice that $A^0_w\neq \emptyset$ if and only if $\{f_i, g_j: i\in \alpha\setminus \mathcal{X}, j\in \beta \setminus \mathcal{Y}\} \subset  \mathcal{F}_{\min} \cup \mathcal{F}_{mid^0}$. Observe also that in the above definition $v,v'$ play not a role in the value of $\phi'_s, \phi'_t$, are needed only for $T^iw+v'\in FIN_k$. With $A^0_w$ we associate $F^0_w=\{i: T^iw\in A^0_w\}$.\\

Similarly if $\{f_i, g_j: i\in \alpha\setminus \mathcal{X},j\in  \beta \setminus \mathcal{Y}\} \subset  \mathcal{F}_{\max}  \cup \mathcal{F}_{mid^1}$ we define\\

 $A^1_w=\{T^iw, i<k:  \phi'_s( w'+ T^iw)=\phi'_s( v'+ T^iw) \text{ and } \phi'_t( w'+ T^iw)=\phi'_t( v'+ T^iw) \text{ for all }v', w'\in \langle Z_1/w\rangle\}$.\\
 
 With $A^1_w$ we associate $F^1_w=\{i: T^iw\in A^1_w\}$. Observe that given $\phi'_s$, $\phi'_t$, $F^0_{w}=F^0_v$ and $F^1_{w}=F^1_v$, for all $v$. Let\\

 $D^0_w=\{ (T^jw, T^iw)\in A^0_w\times A^0_w : Z_1 \text{ mixes } s \cup T^jw+ w'\text{ with }t \cup  T^iw+ v' \\ \text{ for all }v', w'\in \langle Z_1/w\rangle\}$. \\

With $D^0_w$ we associate the set $C^0_w=\{(j,i): (T^jw, T^iw)\in D^0_w\}$. 

Similarly we define\\

$D^1_w=\{ (T^jw, T^iw)\in A^1_w\times A^1_w : Z_1 \text{ mixes } s \cup w'+T^jw \text{ with } t \cup  v'+  T^iw \\ \text{ for all }v', w'\in \langle Z_1/w\rangle\}$. \\

With $D^1_w$ we associate the set $C^1_w=\{(j,i): (T^jw, T^iw)\in D^1_w\}$.

Notice that $C^i_w$, $i<2$, may contains pairs of the form $(j,i)$, where $j=i$. In the case that $\phi_s=\phi_t$ we have that both $C^0_w$ and $C^1_w$ do not contain elements of the form $(j,j)$, $j< k$. The rest is identical with the case that $\phi_s\neq \phi_t$.

Observe also that for every $w_0,w_1\in \langle Z_1 \rangle$, $i<k$ and $f\in  \mathcal{F}_{\max}  \cup \mathcal{F}_{mid^1} $, $f(T^iw_0+w_1)=f(w_1)$. In the case that $f \in \mathcal{F}_{\min} \cup \mathcal{F}_{mid^0}$, then $f(w_0 + T^iw_1)= f(w_0)$.

\begin{claim}Let $f,g \in \mathcal{F}_{\min} \cup \mathcal{F}_{mid^0}  \cup \mathcal{F}_{mid}$, $f\neq g$ and for $i< j$, it holds that $supp(f(w))\subset [\min_{i}(w),\min_{i+1}(w))$ and also $supp(g(w))\subset [\min_{j}(w),\min_{j+1}(w))$. Then for any $w \in \langle Z_1\rangle$ there always exists $T^{k-i}z$, $z<w$, so that $f(T^{k-i}z+w)\neq f(w)$ and $g(T^{k-i}z+w)=g(w)$.

Similarly if $f,g \in \mathcal{F}_{mid}\cup  \mathcal{F}_{\max}  \cup \mathcal{F}_{mid^1}$, $f\neq g$ and for $i< j$, it holds that $supp(f(w))\subset (\max_{i+1}(w),\max_{i}(w)]$ and also $supp(g(w))\subset (\max_{j+1}(w),\max_{j}(w)]$. For any $w\in \langle Z_1\rangle$ there always exists $T^{k-i}z$, $w<z$, so that $f(w+ T^{k-i}z)\neq f(w)$ and $g(w+ T^{k-i}z)=g(w)$.
\end{claim}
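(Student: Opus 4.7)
The plan is to exploit the defining property of $T^{k-i}$: for a sos $k$-vector $z$, the reduction $T^{k-i}z$ has $\max$-value exactly $i$, attained precisely at the positions where $z$ has value $k$. Consequently, prepending $T^{k-i}z$ to $w$ (for the min-case) perturbs $\min_i$ of the combination but leaves $\min_{i'}$ intact for every $i' > i$.

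For the first part I would pick any $z \in \langle Z_1 \rangle$ with $z < w$, which exists since $\langle Z_1 \rangle$ is infinite. Set $u = T^{k-i}z + w$, which lies in $\langle Z_1 \rangle$ since at least one exponent (the one attached to $w$) is zero. I would first verify the two identities
\[
\min_i(u) = \min_k(z) < \min\operatorname{supp}(w), \qquad \min_{i'}(u) = \min_{i'}(w) \text{ for every } i' > i.
\]
Both follow directly from the fact that $T^{k-i}z$ has values in $\{0,1,\ldots,i\}$, with $i$ first achieved at $\min_k(z)$.

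Using these identities, the conclusion $g(u) = g(w)$ is immediate: the support of $g(w)$ lies in $[\min_j(w),\min_{j+1}(w))$ with $j>i$, both $\min_j$ and $\min_{j+1}$ are unchanged in $u$, and every position in that interval is a position of $w$ with the same value under $u$. To see $f(u) \neq f(w)$, I would split according to whether $f = \min_i$ or $f = \theta^0_{i,l}$: in the former case $f$ is a singleton that moves from position $\min_i(w)$ to $\min_k(z)$; in the latter the support interval widens from $(\min_i(w),\min_{i+1}(w))$ to $(\min_k(z),\min_{i+1}(w))$, and since $w$ is a system of staircases, the range of $w$ restricted to $[\min_{i-1}(w),\min_i(w))$ contains every value in $\{0,1,\ldots,i-1\}$, in particular the value $l$, producing a new element of $\operatorname{supp} f(u)$ that is absent from $\operatorname{supp} f(w)$.

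The second (max) statement is completely symmetric: pick $z > w$ in $\langle Z_1 \rangle$, form $w + T^{k-i}z$, and observe that $\max_i$ is shifted to $\max_k(z)$ while $\max_{j}$ and $\max_{j+1}$ are undisturbed because $T^{k-i}z$ carries no value $\geq i+1$; the sos property of $w$ then supplies positions of value $l$ in $(\max_i(w), \max_{i-1}(w)]$ that enlarge the support of $f$ appropriately. I do not foresee a real obstacle here — the content is entirely the interplay between $T^{k-i}$ and the $\min_i/\max_i$ coordinates — but the main bit of care is bookkeeping: checking that the ``extra'' positions introduced by passing from $w$ to $T^{k-i}z + w$ (respectively $w + T^{k-i}z$) really do contain the value $l$ guaranteed by the sos hypothesis, and that this extra contribution genuinely changes the value of $f$ while never entering the support interval used by $g$.
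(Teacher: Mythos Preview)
Your proposal is correct and follows essentially the same approach as the paper's proof: pick any $z<w$ (respectively $z>w$), form $T^{k-i}z+w$ (respectively $w+T^{k-i}z$), and use that $T^{k-i}z$ has maximal value $i$ so that $\min_{j}$, $\min_{j+1}$ (respectively $\max_j$, $\max_{j+1}$) are unchanged while $\min_i$ (respectively $\max_i$) is displaced. The paper's argument is just a terser version of yours---it records the key identities $\min_j(T^{k-i}z+w)=\min_j(w)$, $\min_{j+1}(T^{k-i}z+w)=\min_{j+1}(w)$, $\theta^0_{j,l}(T^{k-i}z+w)=\theta^0_{j,l}(w)$ and leaves the verification that $f$ changes implicit, whereas you spell out the case split $f=\min_i$ versus $f=\theta^0_{i,l}$ and invoke the sos property of $w$ to exhibit the new support point.
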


\begin{proof}Let $f,g \in \mathcal{F}_{\min} \cup \mathcal{F}_{mid^0}  \cup \mathcal{F}_{mid}$. Assume that $i<j$, pick any $z<w$ and consider $T^{k-i}z\in FIN_i$. Notice that $f(T^{k-i}z+w)\neq f(w)$ and $g(T^{k-i}z+w)=g(w)$. This is due to the fact that $\min_j(T^{k-i}z+w)=\min_j(w)$, $\min_{j+1}(T^{k-i}z+w)=\min_{j+1}(w)$ and for every $\theta^0_{j,l}$, $l\leq j$, it holds that $\theta^0_{j,l}(T^{k-i}z+w)=\theta^0_{j,l}(w)$.
 
 Let $f,g \in \mathcal{F}_{\max} \cup \mathcal{F}_{mid^1}  \cup \mathcal{F}_{mid}$ as in the claim.  Pick any $z>w$ and consider $T^{k-i}z\in FIN_i$. Notice that $f(w+T^{k-i}z)\neq f(w)$ and $g(w+T^{k-i}z)=g(w)$. This is due to the fact that $\max_j(w+T^{k-i}z)=\max_j(w)$, $\max_{j+1}(w+T^{k-i}z)=\max_{j+1}(w)$ and for every $\theta^1_{j,l}$, $l\leq j$, it holds that $\theta^1_{j,l}(w+T^{k-i}z)=\theta^1_{j,l}(w)$.

\end{proof}

Observe that on $\langle Z_1\rangle$ there is an one-to-one correspondence between equivalence classes induced by $\phi_s$ and $\phi_t$. Let once more $f,g \in \mathcal{F}_{\min} \cup \mathcal{F}_{mid^0}  \cup \mathcal{F}_{mid}$, $f\neq g$ and $supp(f(w))\subset [\min_{i}(w),\min_{i+1}(w))$, $supp(g(w))\subset [\min_{j}(w),\min_{j+1}(w))$. The case that $i=j$ occurs only in the following two possibilities. First if $f=\min_i$, $g=\theta^0_{i,h}$ and second if $f=\theta^0_{i,h'}$, $g=\theta^0_{i,h}$, $h,h'\leq i$. Let $f=\min_i$, $g=\theta^0_{i,h}$ for $h<i$ and consider $w=T^{k-i}w_0+w_1$. Suppose that $Z_1$ mixes $s\cup w$ with $t\cup w$. Pick $w_0<z<w_1$ and consider $v=T^{k-i}w_0+T^{k-h}z+ w_1$. Then $\phi_s(s\cup v)=\phi_s(s \cup w)$ and $\phi_t(t\cup v)\neq \phi_t(t\cup w)$. Therefore $Z_1$ mixes $s\cup w$ with $s\cup v$ and $t\cup w$. If $Z_1$ mixes $s\cup v$ with $t\cup v$, would imply that $Z_1$ mixes $t\cup w$ with $t\cup v$, but $\phi_t(t\cup w)\neq \phi_t(t\cup v)$.
As a result $Z_1$ separates $s\cup v$ with $t\cup v$. In the case that $h=i$ then $v= T^{k-i}w_0+T^{k-i}z+w_1$. Once more $Z_1$ separates $s\cup v$ with $t\cup v$.

Let now $f=\theta^0_{i,h'}$, $g=\theta^0_{i,h}$. The assumption that $f\neq g$ implies that $h\neq h'$. Suppose once more that $Z_1$ mixes $s\cup w$ with $t\cup w$, for $w= T^{k-i}w_0+w_1$. Pick $z$ so that $w_0<z<w_1$ and consider $v= T^{k-i}w_0+T^{k-h}z+w_1$, in the case that $h<h'$, and $v= T^{k-i}w_0+T^{k-h'}z+w_1$, in the case that $h'<h$. In the first case we have that $\phi_s(s\cup v)\neq \phi_s(s \cup w)$ and $\phi_t(t\cup v)= \phi_t(t\cup w)$. In the second case we have that $\phi_s(s\cup v)=\phi_s(s \cup w)$ and $\phi_t(t\cup v)\neq \phi_t(t\cup w)$. As a consequence $Z_1$ separates $s\cup v$ with $t\cup v$. Therefore given $f\in \phi_s$, $g\in \phi_t$, $f\neq g$, $f,g\in \mathcal{F}_{\min} \cup \mathcal{F}_{mid^0}  \cup \mathcal{F}_{mid}$ and $w$ so that $Z_1$ mixes $s\cup w$ with $t\cup w$, there exists $v$ that results from $w$ by addition, so that $Z_1$ separates $s\cup v$ with $t\cup v$. Let $f,g \in \mathcal{F}_{\max} \cup \mathcal{F}_{mid^1}  \cup \mathcal{F}_{mid}$ and $f\neq g$. By an identical argument we have that the above statement holds as well. \\

To avoid unnecessary length, from now onwards, we are going to assume that for every $f_n,g_m$ so that $f_n\in \phi'_s$ and $g_m\in \phi'_t$ it holds that it is not the case that both $supp(f_n(w)), supp(g_m(w)) \subset [\min_i(w), \min_{i+1}(w))$ or $supp(f_n(w)), supp(g_m(w)) \subset (\max_{i+1}(w), \max_{i}(w)]$, for the same $i\leq k$.

 Assume that for $n_0=\min \alpha \setminus \mathcal{X}$ one of the following holds. First case $supp(f_{n_0})\subset [\min_{i_0}(w),\min_{i_0+1}(w))$ and for all $j\in \beta\setminus \mathcal{Y}$, $supp(g_j(w))\subset [\min_{i}(w), \min_{i+1}(w))$, $i_0<i$ or $supp(g_j(w))\subset (\max_{i+1}(w), \max_{i}(w)]$ for any $i>i_0$. Second case $supp(f_{n_0})\subset (\max_{i_0+1}(w),\max_{i_0}(w)]$ and for all $j\in \beta\setminus \mathcal{Y}$, it holds that for $i>i_0$ $supp(g_j(w))\subset [\min_{i}(w), \min_{i+1}(w))$, or $supp(g_j(w))\subset (\max_{i+1}(w), \max_{i}(w)]$. Let $g_{m_0}$ be so that $m_0=\min \beta \setminus \mathcal{Y}$ and either $supp(g_{m_0}(w))\subset [\min_{i_m}(w), \min_{i_m+1}(w))$ or  $supp(g_{m_0}(w))\subset (\max_{i_m+1}(w), \max_{i_m}(w)]$. Therefore with $f_{n_0}$ we associate $i_0$ and with $g_{m_0}$ we associate $i_m$.

For $w\in \langle Z_1\rangle$ we introduce two more sets.\\
 
$B^0_w=\{T^iw: i<k, \text{ either } \exists i'\in\alpha \setminus \mathcal{X}, \text{ or } j'\in \beta\setminus \mathcal{Y} : f_{i'}(T^iw+w')\neq f_{i'}(w') \text{ or }\\ g_{j'}(T^iw+w')\neq g_{j'}(w'), \forall  w'\in \langle Z_1/w\rangle\},$

 and \\ $B^1_w=\{T^iw:0\neq  i<k, \text{ either }  \exists i'\in\alpha \setminus \mathcal{X}, \text{ or } j'\in \beta\setminus \mathcal{Y} : f_{i'}(w'+T^iw)\neq f_{i'}(w') \text{ or }\\ g_{j'}(w'+T^iw)\neq g_{j'}(w'), \forall  w'<w\}.$\\

 We claim the following.
 
\begin{claim}Let $w\in \langle Z_1\rangle$ and assume that for all $f_n \in \phi'_s$ and $g_m \in \phi'_t$ it holds that $f_n,g_m \in \mathcal{F}_{\min} \cup \mathcal{F}_{mid^0} $. As a result $A^0_w\neq \emptyset$. There exists $$\bar{w}=T^{j_l}z_l+\dots + T^{j_0}z_0+w,$$ $j_0< \dots < j_l$, so that for every $j,i\in F^0_{\bar{w}}$, not necessarily distinct, $Z_1$ separates $s \cup T^i\bar{w}+v$ with $t  \cup T^j \bar{w}+v'$, for any $v,v'\in \langle Z_1/\bar{w} \rangle$.
\end{claim}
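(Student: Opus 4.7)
The plan is to build $\bar{w}$ by adjoining to $w$ a carefully ordered sum of shadows $T^{j_h}z_h$ with $z_h<w$ in $\langle Z_1\rangle$, using the asymmetric control provided by the preceding claim to force separation between the relevant staircase components of $\phi'_s$ and $\phi'_t$. The heights $j_0<\dots<j_l$ are placed in bijection with the distinct support-levels $i_1<\dots<i_p$ of the components of $\phi'_s\cup\phi'_t$, via $j_h=k-i_{p-h}$; by the simplifying assumption made immediately before the claim, each level $i_h$ is occupied by components of at most one of $\phi'_s,\phi'_t$, which is what makes an asymmetric action possible at all.

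Concretely, after picking a block sequence $z_1<\dots<z_p<w$ in $\langle Z_1\rangle$, set
$$\bar{w}=T^{j_{p-1}}z_1+T^{j_{p-2}}z_2+\dots+T^{j_0}z_p+w.$$
Since $F^0$ depends only on $\phi'_s,\phi'_t$ (as noted after the definitions of $A^\varepsilon_w$ and $F^\varepsilon_w$), we get $F^0_{\bar{w}}=F^0_w$ at no cost. The key structural observation is that the shadow $T^{k-i_h}z_h$ is a $k$-vector with range exactly $\{0,\dots,i_h\}$, so (by the min-position computation underlying the preceding claim) $\min_{i_h}(\bar{w})$ lies inside $z_h$, and the staircase components with support at level $i_h$ depend only on $z_h$ and $z_{h+1}$, not on $w$. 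Consequently, for every $i\in F^0_w$ the value $\phi'_s(T^i\bar{w}+v)$ is independent of $v$, and similarly for $\phi'_t(T^j\bar{w}+v')$; by iteratively choosing the $z_h$'s so that at each level $i_h$ hosting a component of $\phi'_s$ (resp. of $\phi'_t$) that component's value on $\bar{w}$ is forced to differ from its value on $w$, one breaks every potential equality $\phi'_s(T^i\bar{w}+v)=\phi'_t(T^j\bar{w}+v')$.

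The main obstacle is combinatorial interference among the shadows: a shadow at level $i_h$ also alters min-positions at levels strictly below $i_h$, so without the right ordering an asymmetry created at one level could be undone at another. This is exactly what the arrangement $j_0<\dots<j_{p-1}$ is designed to prevent: placing the leftmost shadow at the smallest support-level $i_1$ and the rightmost at $i_p$ ensures that the level-$i_h$ computation for each component is decoupled from the shadows strictly to its left, since those shadows have strictly smaller range and contribute nothing at level $i_h$. Once the asymmetric control is simultaneously in place for every $(i,j)\in F^0_w\times F^0_w$, the hypothesis $c_1\upharpoonright[t,Z_1]_{n+1}=1$ forbids mixing of $s\cup T^i\bar{w}+v$ with $t\cup T^j\bar{w}+v'$, and the decision property from Proposition 3 promotes non-mixing to outright separation, completing the proof.
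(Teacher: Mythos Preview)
There is a genuine gap in the mechanism you use to pass from your asymmetric construction to separation. The paper does \emph{not} obtain separation by making the raw values $\phi'_s(T^i\bar w+v)$ and $\phi'_t(T^j\bar w+v')$ unequal; those values live in different $FIN_{\leq k}$'s and their inequality carries no direct information about mixing. What the paper uses instead is the one-to-one correspondence (noted just before Claim~1) between the $\phi_s$-equivalence classes and the $\phi_t$-equivalence classes on $\langle Z_1\rangle$. Concretely: if $(i,j)\in C^0_w$, one adds a shadow so that the $\phi_s$-class of $T^i w_0+v$ \emph{changes} while the $\phi_t$-class of $T^j w_0+v'$ \emph{stays the same}; then, were $Z_1$ still to mix $s\cup T^i w_0+v$ with $t\cup T^j w_0+v'$, transitivity of mixing at equal depth (Lemma~1) together with the bijection would force $\phi_s(T^i w_0+v)=\phi_s(T^i w+v)$, a contradiction. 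Your sketch never invokes this bijection, and the sentence ``one breaks every potential equality $\phi'_s(T^i\bar w+v)=\phi'_t(T^j\bar w+v')$'' is not the right target.

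Your final step is also backward: the hypothesis $c_1\upharpoonright[t,Z_1]_{n+1}=1$ \emph{asserts} the existence of mixing (for every $t\cup w$ there is some $s\cup v$ mixed with it with $\phi_s(v)\neq\phi_t(w)$); it does not forbid anything. In the paper's architecture, Claim~2 is one ingredient in building a $Z'$ that \emph{separates} $s$ and $t$, thereby contradicting $c_1=1$; separation inside Claim~2 comes from the class-bijection argument above, not from $c_1$. Finally, your one-shot construction glosses over the cross-pair case $(j_0,j_1),(j_1,j_0)\in C^0_w$, which in the paper requires the extra hypothesis $T^{n}\phi'_t\neq\phi'_s$ and a two-step addition (the $z_2,z_3$ move) precisely because a single shadow at the common level would shift both classes simultaneously; your ordering argument does not resolve this interference.
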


\begin{proof} Let $w\in \langle Z_1 \rangle$ with $A^0_w\neq \emptyset$ be given and let $j=\max F^0_w$. Assume that $\phi_s\neq \phi_t$. Let $(j,j)\in C^0_w$.
Notice that $supp(f_{n_0}(w))<\max_k(w)$. From above by $i_0<k$ we denote the $supp(f_{n_0})(w)\subset [\min_{i_0}(w), \min_{i_0+1}(w))$. We have assumed also that $supp(g_{m_0}(w))\subset [\min_{i_m}(w), \min_{i_m}(w))$ and $i_m \geq i_0+1$. 
Pick any $z_0\in Z_1$ so that $z_0< w$ and set $w_0= T^{k-i_0-j}z_0+w$, where $T^{k-i_0-j}z_0\in FIN_{i_0+j}$. Notice that $w_0$ is a sos and $\phi_s( T^jw_0+w')\neq \phi_s( T^jw+w')$ and $\phi_t( T^jw_0+w')=\phi_t( T^jw+w')$.

Next consider $F^0_{w_0}$ and $C^0_{w_0}$. If $C^0_{w_0}\neq \emptyset$, then $(j,j)\notin C^0_{w_0}$. Let $j'=\max F^0_{w_0}$. If $(j',j')\in C^0_{w_0}$ repeat the above step to get $w_1= T^{k-i_0-j'}z_1+w_0$, $z_1<z_0$, so that $\phi_s( T^{j'}w_1+w')\neq \phi_s( T^{j'}w_0+w')$ and $\phi_t( T^{j'}w_0+w')=\phi_t( T^{j'}w_1+w')$.

Observe that $(j',j')\notin C^0_{w_1}$. Suppose now that both $(j_0,j_1), (j_1,j_0)\in C^0_{w_1}$, $j_0<j_1$. In other words $Z_1$ mixes $s\cup T^{j_0}w_1+w'$ with $t\cup T^{j_1}w_1+w'$ and also $s\cup T^{j_1}w_1+w'$ with $t\cup T^{j_0}w_1+w'$.
At this point we need the assumption it is not the case that $T^n\phi'_t \neq \phi'_s$, for $n<k$. We consider here the case where $\phi'_s=f_{i_0}$, $\phi'_t=g_{m_0}$, $i_m=i_0+1$ and $j_1=j_0+1$. This immediately implies that $i_0+j_1=i_m+j_0$.
 Assume also that $f_{n_0}(w)\in FIN_h$ and $g_{m_0}(w)\in FIN_{h'}$, where $h' \leq h$.

Pick $z_2<z_1$ and consider $w_2=T^{k-i_0-j_1}z_2+w_1$. 
If $Z_1$ still mixes $s\cup T^{j_1}w_2+w'$ with $t\cup T^{j_0}w_2+w'$, pick $z_3$ so that $z_2<z_3<w_1$ and consider $w_3= T^{k-i_0-j_1}z_2+ T^{k-h'-j_0}z_3+w_1$ 
Observe that $\phi_s(s\cup T^{j_1}w_3+w')= \phi_s(s\cup T^{j_1}w_2+w')$ and $\phi_t(t \cup T^{j_0}w_3+w')\neq \phi_t(t \cup  T^{j_0}w_2+w')$. In the case that $h'>h+1$ then $w_3= T^{k-i_0-j_1}z_2+ T^{k-h-j_1}z_3+w_1$ and we would have that $\phi_s(s\cup T^{j_1}w_3+w')\neq \phi_s(s\cup T^{j_1}w_2+w')$ and $\phi_t(t \cup T^{j_0}w_3+w')= \phi_t(t \cup  T^{j_0}w_2+w')$.
Therefore $Z_1$ separates $s\cup T^{j_1}w_3+w'$ with $t \cup T^{j_0}w_3+w'$. The reason that we need the assumption that $T^n\phi'_t\neq \phi'_s$, is that in the case of equality we will not be able by adding $z_2,z_3$, as we did just above, to separate $s\cup T^{j_1}w_3$ with $t \cup T^{j_0}w_3+w'$. This assumption cause not problem, see right after the end of this proof for a justification.

 If now $(j_0,j_1)\in C^0_{w_3}$ as well, pick $z_4<z_2$ and consider $w_4= T^{k-i_0-j_0}z_4+w_3$. Observe that $\phi_s(s\cup T^{j_0}w_4+w')\neq \phi_s(s\cup T^{j_0}w_3+w')$ and $\phi_t(t\cup T^{j_1}w_4+w') = \phi_t(t\cup T^{j_1}w_3+w')$. As a consequence $Z_1$ separates $s\cup T^{j_0}w_4+w'$ with $t\cup T^{j_1}w_4+w'$. All the other cases are dealt in an identical manner.
Proceed in this manner to get $\bar{w}$ that satisfies the conclusions of our claim.

In the case that $\phi_s=\phi_t$, as remarked above, the possibility of $(j,j)\in C^0_w$ does not occur for every $j<k$. Suppose that both $(j_0,j_1), (j_1,j_0)\in C^0_{w}$, $j_0<j_1$. In other words $Z_1$ mixes $s\cup T^{j_0}w+w'$ with $t\cup T^{j_1}w+w'$ and also $s\cup T^{j_1}w+w'$ with $t\cup T^{j_0}w+w'$ and $j_0<j_1$. For $z<w$, let $w_0=T^{k-i_0-j_0}z+w$. Notice that $\phi_s(T^{j_0}w_0+ w')\neq \phi_s(T^{j_0}w+w')$ and $\phi_t(T^{j_0}w_0+w')\neq \phi_t(T^{j_0}w+w')$. As a result both $(j_0,j_1)$ and $(j_1,j_0)$ are not in $C^0_{w_0}$. Proceed in this manner to get the $\bar{w}$ that satisfies the conclusions of our claim.
\end{proof}

Suppose that $T^jw,T^iw\in A^0_w$ and $i>j$. Suppose also that $T^{i-j}\phi'_t(w)=\phi'_s(w)$. In the case that $Z_1$ mixes $s\cup T^{i}w+v$ with $t\cup T^{j}w+v$, it is not possible to separate $s\cup T^{i}\bar{w}+v$ with $t\cup T^{j}\bar{w}+v$, for any $\bar{w}$ that results from $w$ by means of addition. 

The assumption that $T^{i-j}\phi'_t\neq \phi'_s$, causes not problem due to the fact that $\langle FIN^{[\infty]}_k,\leq,r\rangle$ is a topological Ramsey space. This reduces to the following sequence of colorings. Let $Z\in FIN_k^{[\infty]}$ and $\phi$ a staircase function that determines an equivalence relation on $\langle Z=(z_n)_{n\in \omega} \rangle$. In the case that every equivalence class of $\phi$ is infinite we proceed as follows. Let $i'<k$ be minimal so that  $\phi(z_0+T^{i'}v)= \phi(z_0+T^{i'}v')$ and for every $j<i'$, $\phi(z_0+T^{i'}v)\neq \phi(z_0+T^jv)$, where $v,v'\in \langle Z/z_0 \rangle$. Consider the coloring $c_0:\langle Z/z_0 \rangle \to 2$, defined by 

\begin{equation*}
c_0(w)=
\begin{cases}
1 & \text{ if } T^{i-j}\phi'_{z_0+T^{i'}w}= \phi'_{z_0}  \text{ and  } Z \text{ mixes } \\& z_0\cup T^{i}v+v' \text{ with } z_0+T^{i'}w \cup T^{j}v+v' ,  \\
0 & \text{ otherwise.}
\end{cases}
\end{equation*}

There exists $Z_0=(z^0_n)_{n\in \omega} \leq Z$ so that for $c_0\upharpoonright \langle Z_0 \rangle$ is constant. If the constant value is equal to one, observe that for all $z_0+T^{i'}z^0_0$, $z_0+T^{i'}z^0_0+T^{i'}v$ it does hold that $\phi'_{z_0+T^{i'}z^0_0}= \phi'_{z_0+T^{i'}z^0_0+T^{i'}v}$, for every $v\in \langle Z_0/z^0_0\rangle$. Repeat the above coloring for all $j'>i'$. Suppose that in theses cases the constant value is equal to zero.
Set $w_0=z_0+T^{i'}z^0_0$, and consider the finite set $A=\{ T^jw_0: j\leq k\}$. For every $v\in A$ consider the coloring $c_v:\langle Z_0/z^0_1 \rangle \to 2$ defined by 

\begin{equation*}
c_v(w)=
\begin{cases}
1 & \text{ if }T^{i-j}\phi'_{v+z^0_1}=\phi'_{v+z^0_1+T^{i'}w} \text{ and }  Z_0 \text{ mixes } \\ & v+z^0_1\cup T^{i}v' +v'' \text{ with } v+z^0_1+T^{i'}w\cup T^{j}v'+v'',\\
0 & \text{ otherwise.}
\end{cases}
\end{equation*}

After repeating that for all $v\in A$, $j'>i'$, we get $Z_1\leq Z_0$ and $w_1$ so that $\phi'_{v+w_1}= \phi'_{v+w_1+T^{i'}w'}$, for every $v\in A$ and  $w' \in \langle Z_1/w_1\rangle$, in the case that the constant values are equal to zero for the coloring corresponding to all $j'>i'$. Proceed in this manner to get $Z=(w_n)_{n\in \omega}$ where our assumption holds. Identical argument holds in the case that we color $T^{i-j}\phi'_{z_0}= \phi'_{z_0+T^{i'}w}$ and $T^{i-j}\phi'_{v+z^0_1+T^{i'}w}=\phi'_{v+z^0_1}$.

Observe that for every $k$, $\phi$ defines an equivalence relation, where each equivalence class has finitely many elements if and only if $\phi=\max_1$ or $\phi=\theta^1_{11}$. In this case we color as follows. Fix $w_0\in Z$ and consider the coloring $c_0: \langle Z/w_0\rangle \to 2$, defined by 

\begin{equation*}
c_2(w)=
\begin{cases}
1 & \text{ if } T^{i-j}\phi'_{w}=\phi'_{w_0+w} \text{ and } Z \text{ mixes }  \\ &w_0+w \cup T^{i}w'+v \text{ with } w\cup T^{j}w'+v, \\
0 & \text{ otherwise.}
\end{cases}
\end{equation*}

There exists $Z_0\leq Z$ so that $c_2\upharpoonright \langle Z_0/w_0\rangle $ is constant. If $c_0\upharpoonright \langle Z_0 /w_0\rangle =1$, notice that on $\langle Z_1/w_0\rangle$, $\phi(w)=\phi(v)$ implies that $\phi_{w}= \phi_v$. Then we will get $Z_1/w_0\leq Z_1$ so that our assumption holds. Identical in all other cases. \\

Notice that if  for all $f_n \in \phi'_s$ and $g_m \in \phi'_t$ it holds that $f_n,g_m  \in \mathcal{F}_{\max} \cup \mathcal{F}_{mid^1}$, by an identical argument with that of Claim $2$ we get that there always exists $\bar{w}=w+T^{j_0}z_0+\dots + T^{j_l}z_l$, $j_0< \dots < j_l$ so that for every $i,j\in F^1_{\bar{w}}$ $Z_1$ separates $s\cup w'+ T^i\bar{w}$ with $t\cup w'+ T^j\bar{w}$. 

\begin{observation}

In the case that both $A^0_w$ and $A^1_w$ are empty sets, $supp(f_{n_0}(w))<\min_k(w)$ and $Z_1$ mixes $s\cup w$ with $t \cup w$, then for any $z<w$, $Z_1$ separates $s\cup T^{k-i_0}z+w$ with $t\cup  T^{k-i_0}z+w$.

 Conversely if $supp(f_{n_0}(w))>\max_k(w)$ and and $Z_1$ mixes $s\cup w$ with $t \cup w$, then for any $z>w$, $Z_1$ separates $s\cup w+T^{k-i_0}z$ with $t\cup w+T^{k-i_0}z$. 
 \end{observation}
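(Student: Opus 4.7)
The plan is to argue by contradiction, exploiting the canonical form of the equivalence relation on $Z_1$ together with the positional rigidity of the staircase maps in $\mathcal{G}$. Suppose that $Z_1$ does not separate $s\cup T^{k-i_0}z+w$ from $t\cup T^{k-i_0}z+w$. Since $Z_1$ has already been arranged to decide every pair in $\hat{\mathcal{F}}\setminus\mathcal{F}$ by Proposition 3, this forces $Z_1$ to mix them. Setting $w_0=T^{k-i_0}z+w$, the choice $z<w$ together with $T^{k-i_0}z\in FIN_{i_0}$ makes $w_0$ a system of staircases in which $\min_i(w_0)=\min_i(T^{k-i_0}z)$ for $1\le i\le i_0$, $\min_i(w_0)=\min_i(w)$ for $i>i_0$, and $\max_i(w_0)=\max_i(w)$ throughout.

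Next I would track each component of $\phi_s$ and $\phi_t$ under the substitution $w\mapsto w_0$. The hypothesis $\operatorname{supp}(f_{n_0}(w))<\min_k(w)$ places $f_{n_0}$ in $\mathcal{G}_{\min}\cup\mathcal{G}_{mid^0}$ at level $i_0$, so $f_{n_0}(w_0+w')$ reads $\min_{i_0}(w_0+w')=\min_{i_0}(T^{k-i_0}z)$ for every right extension $w'\in\langle Z_1/w_0\rangle$; in particular $f_{n_0}(w_0+w')\ne f_{n_0}(w+w')$, and the $f_{n_0}$-coordinate on the $s$-side is locked into the $z$-region. By the standing assumption that every $g_j\in\phi'_t$ has positional level strictly above $i_0$, the same substitution leaves every $g_j$-coordinate untouched, so $\phi'_t(t\cup w_0+v')=\phi'_t(t\cup w+v')$ for every $v'\in\langle Z_1/w_0\rangle$. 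The shared components indexed by $\mathcal{X}$ transform identically on the two sides and can be cancelled from the comparison.

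The main obstacle is that mixing of $s\cup w$ with $t\cup w$ is not a single equality of canonical values but a statement about further extensions reaching the front $\mathcal{F}$, so to extract a contradiction I must descend to the canonical maps at those deeper levels. To handle this I would apply Theorem 1 on $Z_1$ to the restricted fronts $\mathcal{F}_{s\cup w}$ and $\mathcal{F}_{t\cup w}$, producing canonical witnesses of mixing that must equate the $\phi'_s$- and $\phi'_t$-components after every right extension. The computation above shows that prepending $T^{k-i_0}z$ shifts the $f_{n_0}$-coordinate on the $s$-side to a position that no $g_j$ on the $t$-side ever examines, so no pair $w',v'$ can restore the match demanded by mixing; this contradicts the hypothesis that $Z_1$ mixes $s\cup w$ with $t\cup w$ and establishes the desired separation. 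The converse statement, with $\operatorname{supp}(f_{n_0}(w))>\max_k(w)$ and $z>w$, follows by the same argument with $\min$ replaced by $\max$ and $\mathcal{G}_{mid^0}$ replaced by $\mathcal{G}_{mid^1}$.
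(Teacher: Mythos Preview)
Your setup is right: with $w_0=T^{k-i_0}z+w$ you correctly compute that $\phi_s(w_0)\neq\phi_s(w)$ while $\phi_t(w_0)=\phi_t(w)$, since $f_{n_0}$ sits at level $i_0$ and every $g_j\in\phi'_t$ sits strictly higher. The gap is in how you close the argument. You try to ``descend to the canonical maps at deeper levels'' by invoking Theorem~1 on the restricted fronts $\mathcal{F}_{s\cup w}$, $\mathcal{F}_{t\cup w}$ and then assert that ``no pair $w',v'$ can restore the match demanded by mixing.'' This is both circular (Theorem~1 is what is being proved) and not an argument: mixing of $s\cup w_0$ with $t\cup w_0$ is a statement about the existence of extensions to $\mathcal{F}$ with equal $f$-value, and nothing you have written rules that out. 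Your final line even misidentifies what is being contradicted.

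The paper's argument is much shorter and stays entirely at the level of one-step extensions. From $\phi_t(w_0)=\phi_t(w)$ one has that $Z_1$ mixes $t\cup w$ with $t\cup w_0$. Now assume for contradiction that $Z_1$ mixes $s\cup w_0$ with $t\cup w_0$. Together with the hypothesis that $Z_1$ mixes $s\cup w$ with $t\cup w$, and using transitivity of mixing at equal depth (Lemma~1), one concludes that $Z_1$ mixes $s\cup w$ with $s\cup w_0$. But $\phi_s$ is precisely the staircase map canonizing mixing on $[s,Z_1]_{|s|+1}$, so this forces $\phi_s(w)=\phi_s(w_0)$, contradicting the computation above. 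No descent into $\mathcal{F}$ is needed; the one-to-one correspondence between $\phi_s$-classes and $\phi_t$-classes (noted just before Claim~1 in the paper) together with Lemma~1 does all the work. The converse case is symmetric.
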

 
 For combinatorial purposes we consider strong systems of staircases a subset of the set of system of staircases. A $w\in FIN_k$ is a strong system of staircases if and only if $w= T^{k-1}w_1+T^{k-2}w_2+ \dots + Tw_{k-1}+w^0_k+w^1_k + Tw_{k+1}+\dots T^{k-2}w_{2k-2}+T^{k-1}w_{2k-1}$, for $w_1,\dots w_{2k-1}$ system of staircases. From now on we are considering strong system of staircases.

Next we claim the following.

\begin{claim}Let $w\in  Z_1/(s,t)$. There exists $Z_2\leq Z_1/w$ and $(T^{k-j_l}z_l)_{l\in \{1,\dots, k-1\}}$, $(T^{k-j^d_k}z^d_k)_{d\in m}$, where $(j_l)_l\in \{1,\dots, k \}$, $(j^d_k)_{d \in m}$ can be equal to $0$, so that for all $v\in \langle Z_2/(s,t) \rangle$, $T^jw,T^iw\in B^0_w$, not necessarily distinct, $Z_2$ separates $s\cup T^iw'+v$ with $t\cup T^jw'+v$. By $w'$ we denote $w'=T^{k-1}w_1+ T^{k-j_1}z_1+T^{k-2}w_2+ T^{k-j_2}z_2+ \dots +Tw_{k-1}+ T^{k-j_{k-1}}z_{k_1}+w^0_k+ T^{k-j^0_k}z^0_k + \dots +T^{k-j^{m-1}_k}z^{m-1}_k +w_k^1+ Tw_{k+1}+\dots T^{k-2}w_{2k-2}+T^{k-1}w_{2k-1}$.

\end{claim}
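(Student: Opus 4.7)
The plan is to iterate the separation argument from Claim 2 over all pairs in $B^0_w \times B^0_w$, and then use the topological Ramsey property to consolidate into a single $Z_2$. First I would enumerate the pairs $(T^jw, T^iw) \in B^0_w \times B^0_w$, allowing $i=j$. For a fixed such pair, the definition of $B^0_w$ supplies a witnessing function $f_{i'} \in \phi'_s$ or $g_{j'} \in \phi'_t$ whose value is affected by the presence of $T^iw$ (resp.\ $T^jw$). The support of this witnessing function sits inside some interval $[\min_h(w),\min_{h+1}(w))$ determined by the staircase level at which the function acts; this level tells me exactly between which pieces $w_h, w_{h+1}$ of the strong staircase decomposition of $w$ to insert an auxiliary $T^{k-j_l}z_l$.

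For each pair, the argument proceeds exactly as in Claim 2: suppose toward a contradiction that $Z_1$ mixes $s\cup T^iw+v$ with $t\cup T^jw+v$. Choose $z_l < w$ (or $z^d_k$ for the middle level) and insert $T^{k-j_l}z_l$ between the appropriate staircase pieces of $w$ to form $w'$; by the supports assumption and the preliminary reduction ensuring $T^n \phi'_t \neq \phi'_s$ for every $n < k$, this insertion toggles the value of exactly one of $\phi_s, \phi_t$ on one of the two sides while leaving the other fixed. Hence $Z_1$ must separate $s\cup T^iw'+v$ from $t\cup T^jw'+v$. The diagonal case $i = j$ relies critically on the earlier reduction step (performed with the coloring $c_0, c_v$ on $\langle Z / z^0_0\rangle$) that rules out the bad coincidence $T^n \phi'_t = \phi'_s$; in the shared-min or shared-max case I would revisit Claim 1 to see which height of insertion distinguishes the two functions.

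To do this uniformly in $v \in \langle Z_1/(s,t)\rangle$, I would, for each fixed pair and each fixed candidate $w'$, define a two-coloring of $[t\cup T^jw', Z_1/(s,t)]_{|s|+1}$ recording whether the separation fails, and apply Corollary $2$ (equivalently, axiom A.4) to pass to a reduct where it succeeds; then iterate over pairs, each time shrinking, and finally apply Proposition 1 to diagonalize so that the same $Z_2$ works for every pair simultaneously. The construction of $w'$ accumulates one term $T^{k-j_l}z_l$ per pair processed, and the strong system of staircases formalism guarantees the resulting $w'$ is still a strong sos, because each insertion respects the interval structure $[\min_{l-1}, \min_l)$ that defines sos.

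The main obstacle is bookkeeping rather than a new idea: I need the auxiliary vectors $z_l, z^d_k$ inserted for different pairs not to interfere with one another's separating effect. Specifically, an insertion made to distinguish the pair $(T^jw, T^iw)$ at staircase level $l$ must not accidentally restore mixing for another pair whose witnessing function lives at level $l' \neq l$. This is precisely the content of Claim 1 applied repeatedly: because $f \in \mathcal{G}_{\min}\cup\mathcal{G}_{mid^0}$ supported inside $[\min_{l}, \min_{l+1})$ is invariant under additions supported strictly below $\min_l$, the insertions at different levels commute in their effect on $\phi_s, \phi_t$. Once this compatibility is verified level by level, the construction closes and the reductions $j_1 > \dots > j_{k-1}$ (and the $j^d_k$ for the central level) yield the required $w'$ and $Z_2$.
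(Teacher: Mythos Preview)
Your plan matches the paper's proof of Claim~3: for each pair in $B^0_w\times B^0_w$ one applies a two-coloring on $\langle Z_1/w\rangle$ to stabilize the mixing/separation dichotomy, and when mixing persists one inserts an appropriate $T^{k-j_l}z_l$ between the staircase pieces of $w$ so that exactly one of $\phi_s,\phi_t$ changes value (the paper carries out the explicit case analysis on the heights $h,h'$ and on whether $f=T^{j-i}g$ that your sketch alludes to), then iterates over the finitely many pairs. Two minor corrections: the step is a direct construction rather than a proof by contradiction (the insertion produces the $w'$ for which separation holds, it does not refute mixing for the original $w$), and the final appeal to Proposition~1 is superfluous since $B^0_w$ is finite and a finite chain of reductions already yields a single $Z_2$; your remark on non-interference of insertions at distinct levels via Claim~1 is correct and in fact makes explicit a point the paper leaves implicit.
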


\begin{proof} Let $w\in  Z_1/(s,t)$, $T^{k-j}w,T^{k-i}w\in B^0_w$, so that $f\in \phi'_s$ witnesses that $T^{k-i}w\in B^0_w$ and $g\in \phi'_t$ witnesses that $T^{k-j}w\in B^0_w$. Consider the coloring $c_2:\langle Z_1/w\rangle \to 2$ defined as follows.
\begin{equation*}
c_2(v')=
\begin{cases}
1 & \text{ if } Z_1 \text { separates }s \cup T^{k-i}w+v' \text{ with } t\cup T^{k-j}w+v',\\
0 & \text{ otherwise.}
\end{cases}
\end{equation*}

By the fact that we are in a topological Ramsey space, we get $Z_2\leq Z_1/w$ so that $c_2\upharpoonright \langle Z_2\rangle$ is constant. If the constant value is equal to one, then the conclusions of our claim are satisfied for $w'=w$. 
Let the constant value be equal to zero. At this point we need the assumption that $T^{n}g_{m_0}w\neq f_{i_0}(w)$, for $n=i_m-i_0$. This assumption causes not a problem, see right after the end of this proof.
Let $f(w)\in FIN_h$, $g(w)=FIN_{h'}$ and assume that $f(w)=T^{j-i}g(w)$, $i<j$ and $f_{i_0}<f$. Add $T^{k-i_0-i}z$ to the right of $T^{k-i_0-i}w_{i_0+i}$, where $i_0<k$ is so that $supp(f_{i_0}(w))\subset [\min_{i_0}(w), \min_{i_0+1}(w))$, for $f_{i_0}\in \phi'_s$ as defined above, and observe that for all $v'\in \langle Z_2 \rangle$, $\phi_s(T^{k-i}w'+v')\neq \phi_s(T^{k-i}w+v')$ and $\phi_t(T^{k-j}w'+v') =\phi_t(T^{k-j}w+v')$. If now $f(w)\neq T^{j-i}g(w)$, $i<j$ and $h<h'$, add $T^{k-h-i}z$ to the right of $w^0_k$ and notice that $\phi_s(T^{k-i}w'+v')\neq  \phi_s(T^{k-i}w+v')$ and $\phi_t(T^{k-j}w'+v') = \phi_t(T^{k-j}w+v')$. Similarly in all the other cases.

Finally in the case that $h=h'$, assuming that $i<j$ add $T^{k-h-i}z$ to the right of $w^0_k$ and observe that $\phi_s(T^{k-i}w'+v')\neq  \phi_s(T^{k-i}w+v')$, $\phi_t(T^{k-j}w'+v') =\phi_t(T^{k-j}w+v')$. In the case that $j<i$, add $T^{k-h-j}z$ to the right of $w^0_k$ and observe that $\phi_s(T^{k-i}w'+v')= \phi_s(T^{k-i}w+v')$ and $\phi_t(T^{k-j}w'+v') \neq  \phi_t(T^{k-j}w+v')$.
Repeat this step to all possible such pairs $T^{k-j}w,T^{k-i}w\in B^0_w$ to get $w'$ that satisfies the conclusions of our claim.

\end{proof}
Suppose $f\in \phi'_s$ witnesses that $T^iw\in B^0_w$  and $g\in \phi'_t$ witnesses that $T^jw\in B^0_w$ and $i>j$. Suppose also that $T^{i-j}g(w)=f(w)$ and there is not $g'\in \phi'_t$ with $g'\neq g$, $g'<g$ and $f'\in \phi'_s$, $f'\neq f$, $f'<f$. In the case that $Z_1$ mixes $s\cup T^{i}w+v$ with $t\cup T^{j}w+v$, for all $v\in Z_1/w$, it is not possible to separate $s\cup T^{i}w'+v$ with $t\cup T^{j}w'+v$, for any $w'$ that results from $w$ by addition. This occurs in the case that either $T^{i-j}g(w)=f(w)$ and there is not $f'\in \phi'_s$, $f' \neq f$ where $f'<f$ and $g'\in \phi'_t$, $g'\neq g$ so that $g'<g$. This condition in our context of $s$ and $t$ as above, amounts to $T^ng_{m_0}\neq f_{i_0}$, for $n<k$.

The assumption that $T^{n}g_{m_0}(w)\neq f_{i_0}(w)$, causes not problem due to the fact that $\langle FIN^{[\infty]}_k,\leq,r\rangle$ is a topological Ramsey space. This reduces to the following sequence of colorings. Let $Z\in FIN_k^{[\infty]}$ and $\phi$ a staircase function that determines an equivalence relation on $\langle Z=(z_n)_{n\in \omega} \rangle$. In the case that every equivalence class of $\phi$ is infinite we proceed as follows. Let $i<k$ be minimal so that  $\phi(z_0+T^iv)= \phi(z_0+T^iv')$ and for every $j<i$, $\phi(z_0+T^iv)\neq \phi(z_0+T^jv)$, where $v,v'\in \langle Z/z_0 \rangle$. We are going to consider here the case where $n=i_m-i_0$. The case of any other $n$ is identical. Consider the coloring $c_0:\langle Z/z_0 \rangle \to 2$, defined by 

\begin{equation*}
c_0(w)=
\begin{cases}
1 & \text{ if } f_{i_0}= T^{i_m-i_0}g_{m_0} \text { where } f_{i_0}\in \phi'_{z_0}, g_{m_0}\in \phi'_{z_0+T^iw} \text{ and  } Z \text{ mixes } \\& z_0\cup T^{k-i_0}v+v' \text{ with } z_0+T^iw \cup T^{k-i_m}v+v' ,  \\
0 & \text{ otherwise.}
\end{cases}
\end{equation*}

There exists $Z_0=(z^0_n)_{n\in \omega} \leq Z$ so that for $c_0\upharpoonright \langle Z_0 \rangle$ is constant. If the constant value is equal to one, observe that for every $z_0+T^iz^0_0$, $z_0+T^iz^0_0+T^iv$ it does hold that $f^{z_0+T^iz^0_0}_{i_0}= f^{z_0+T^iz^0_0+T^iv}_{i_0}$, for all $v\in \langle Z_0/z^0_0\rangle$. In other words $i_0\in \mathcal{X}$, i.e. $f^{z_0+T^iz^0_0}_{i_0}\notin \phi'_{z_0+T^iz^0_0}$ and $f^{z_0+T^iz^0_0+T^iv}_{i_0}\notin \phi'_{z_0+T^iz^0_0+T^iv}$.
Notice that we can repeat this up to $k-3$ times, cause the new $f_{i'_0}^{z_0+T^iz^0_0}$ is so that $supp(f_{i'_0}^{z_0+T^iz^0_0})\subseteq (\min_{i'_0}(z_0+T^iz^0_0), \min_{i'_0+1}(z_0+T^iz^0_0))$ for $i'_0>i_0$. Therefore we will get to a block subsequence $Z$ that either the first alternative does not hold, or $\phi_{z_0+T^iz^0_0+T^iv}=\phi_{z_0+T^iz^0_0+T^iv'}$, for all $v,v'\in \langle Z/(z_0+T^iz^0_0)\rangle$. Repeat the above coloring for every $j>i$. Assuming that in all these colorings the constant value is $0$, set $w_0=z_0+T^iz^0_0$, and consider the finite set $A=\{ T^jw_0: j\leq k\}$. For every $v\in A$ consider the coloring $c_v:\langle Z_0/z^0_1 \rangle \to 2$ defined by 

\begin{equation*}
c_v(w)=
\begin{cases}
1 & \text{ if } f_{i_0}= T^{i_m-i_0}g_{m_0} \text { where } f_{i_0}\in \phi'_{v+z^0_1} \text{ and  } g_{m_0}\in \phi'_{v+z^0_1+T^iw}\text{ and }\\ &  Z_0 \text{ mixes } v+z^0_1\cup T^{k-i_0}v' +v'' \text{ with } v+z^0_1+T^iw\cup T^{k-i_m}v'+v'', \\ 
0 & \text{ otherwise.}
\end{cases}
\end{equation*}

After repeating that for all $v\in A$, as above, we get $Z_1\leq Z_0$ and $w_1$ so that $f^{v+w_1}_{i_0}= f^{v+w_1+T^iw'}_{i_0}$, for every $v\in A$ and  $w' \in \langle Z_1/w_1\rangle$. Proceed in this manner to get $Z=(w_n)_{n\in \omega}$ where our assumption holds. The above argument is identical in the case that $f_{i_0}\in  \phi'_{z_0+T^iw}$, $g_{m_0}\in\phi'_{z_0}$ and $f_{i_0}\in \phi'_{v+z^0_1+T^iw}$, $g_{m_0}\in \phi'_{v+z^0_1}$.

Observe that for every $k$, $\phi$ defines an equivalence relation, where each equivalence class has finitely many elements if and only if $\phi=\max_1$ or $\phi=\theta^1_{11}$. In this case we color as follows. Fix $w_0\in Z$ and consider the coloring $c_0: \langle Z/w_0\rangle \to 2$, defined by 

\begin{equation*}
c_0(w)=
\begin{cases}
1 & \text{ if } T^{i_m-i_0}g_{m_0}= f_{i_0},  f_{i_0}\in \phi_{w_0+w}, g_{m_0}\in \phi_w \text{ and } Z \text{ mixes } \\ & w_0+w \cup T^{k-i_0}w'+v \text{ with } w\cup T^{k-i_m}w'+v, \\
0 & \text{ otherwise.}
\end{cases}
\end{equation*}

There exists $Z_0\leq Z$ so that $c_0\upharpoonright \langle Z_0/w_0\rangle $ is constant. If $c_0\upharpoonright \langle Z_0 /w_0\rangle =1$, notice that on $\langle Z_1/w_0\rangle$, $\phi(w)=\phi(v)$ implies that $ f^w_{i_0}=f^v_{i_0}$, where $f^w_{i_0}\in \phi_{w}, f^v_{i_0}\in \phi_v$. As a result $\{ f^w_{i_0},f^v_{i_0} \} \subseteq \phi_w\cap \phi_v$. Observe that we can repeat this step up to $k-3$ times. Then we will get $Z'\leq Z_1$ so that our assumption holds.

Next we prove the following.

\begin{claim}Let $w\in \langle Z_1\rangle$ be given. There exists $Z_3\leq Z_1/w$ and $(T^{k-j_i}z_i)_{i\in \{1,\dots, k-1\}}$, where any $(j_i)_{i\in i\in \{1,\dots, k-1\}}$, can be equal to $0$, so that for all $v\in \langle Z_3/(s,t) \rangle$, $T^{k-j}v, T^{k-i}v\in B^1_v$, $Z_3$ separates $s\cup w'+T^{k-i}v$ with $t\cup w'+T^{k-j}v$, where $w'=T^{k-1}w_1+T^{k-2}w_2+  \dots +Tw_{k-1}+w^0_k +w^1_k+ T^{k-j_{k-1}}z_{k+1}+ Tw_{k-1}+  \dots + T^{k-j_2}z_2+ T^{k-2}w_{2k-2}+T^{k-1}w_{2k-1}+ T^{k-j_1}z_1$.

\end{claim}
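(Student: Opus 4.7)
The plan is to execute the same argument as in Claim~3, mirrored to the maximum-side of the strong sos decomposition of $w$. Accordingly, I fix an ordered pair $(i,j)\in \{1,\dots,k\}^2$ and (for each $v$) a pair $T^{k-i}v, T^{k-j}v \in B^1_v$, with $f\in \phi'_s$ witnessing $T^{k-i}v\in B^1_v$ and $g\in \phi'_t$ witnessing $T^{k-j}v\in B^1_v$, and define the two-coloring
\begin{equation*}
c_{i,j}(v) =
\begin{cases}
1 & \text{if } Z_1 \text{ separates } s\cup w + T^{k-i}v \text{ with } t\cup w + T^{k-j}v, \\
0 & \text{otherwise.}
\end{cases}
\end{equation*}
The Ramsey property of $\langle FIN_k^{[\infty]},\leq,r\rangle$ yields a common block subsequence $Z_3'\leq Z_1/w$ on which every $c_{i,j}$ is constant. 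For those pairs where the constant value is $1$ no insertion is needed, so $w'=w$ already satisfies the conclusion for that pair.

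If the constant is $0$ for some pair, then $Z_3'$ mixes $s\cup w + T^{k-i}v$ with $t\cup w + T^{k-j}v$ for every $v\in \langle Z_3'\rangle$. Here, as in Claim~3, I invoke the hypothesis $T^n g_{m_0}(w)\neq f_{i_0}(w)$ (secured by the auxiliary sequence of Ramsey colorings given immediately after Claim~3) and carry out the mirror of the Claim~3 perturbation. With $f(w)\in FIN_h$ and $g(w)\in FIN_{h'}$, the subcases $f(w)=T^{j-i}g(w)$, $h<h'$, $h>h'$, $h=h'$ are each handled by inserting an appropriate $T^{k-i_0-i}z$ or $T^{k-h-i}z$ (respectively $T^{k-h-j}z$) to the \emph{left} of the corresponding right-half block $T^{k-i_0-i}w_{2k-1-(i_0+i)}$ or $w^1_k$ of the strong sos decomposition of $w$. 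A direct check with the operators $\max_i$ and $\theta^1_{i,l}$ shows that exactly one of $\phi_s(w'' + T^{k-i}v)$ and $\phi_t(w'' + T^{k-j}v)$ now differs from its value at $w$, so by the transitivity of mixing at equal depth (Lemma~1), $Z_3'$ must separate $s\cup w'' + T^{k-i}v$ from $t\cup w'' + T^{k-j}v$, producing the required perturbation $w''$ for this pair.

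Iterating over the finitely many pairs $(i,j)\in \{1,\dots,k\}^2$, shrinking $Z_3'$ at each stage, and accumulating the perturbation terms $T^{k-j_i}z_i$ in the positions along the right half of the strong sos decomposition of $w$ prescribed by the statement, yields the desired $Z_3\leq Z_1/w$ and $w'$. The main obstacle beyond routine mirror bookkeeping is to guarantee that the accumulated $w'$ remains a strong system of staircases, which follows automatically because each inserted $T^{k-j_i}z_i$ has rank matched to its adjacent block of $w$ exactly as in Claim~3; the Ramsey stabilization of $Z_3'$ on all the $c_{i,j}$ \emph{before} any perturbation is made is what ensures that a single $w'$ serves simultaneously for all choices of $v$ and all pairs in $B^1_v \times B^1_v$.
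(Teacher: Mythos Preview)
Your proposal is correct and follows the same overall scheme as the paper: define the two-coloring on $\langle Z_1/w\rangle$ according to whether $Z_1$ separates $s\cup w+T^{k-i}v$ from $t\cup w+T^{k-j}v$, pass to a block subsequence where it is constant, and in the constant-$0$ case perturb $w$ by inserting a block in the right half of the strong sos decomposition so as to change exactly one of $\phi_s,\phi_t$ on the relevant tails; then iterate over all pairs.

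The difference is in the perturbation step. You faithfully mirror the full case analysis of Claim~3 (the subcases $f(w)=T^{j-i}g(w)$, $h<h'$, $h>h'$, $h=h'$, with insertions next to $w_{2k-1-(i_0+i)}$ or $w_k^1$) and invoke the auxiliary hypothesis $T^n g_{m_0}\neq f_{i_0}$. The paper's proof is noticeably simpler here: after assuming without loss of generality $i<j$ (so $k-i>k-j$), it inserts a \emph{single} block $T^{k-j}z_j$ at the appropriate position, which immediately yields $\phi_s(w'+T^{k-i}v')=\phi_s(w+T^{k-i}v')$ while $\phi_t(w'+T^{k-j}v')\neq\phi_t(w+T^{k-j}v')$. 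No breakdown by the ranks $h,h'$ of $f(w),g(w)$ and no appeal to the $T^n g_{m_0}\neq f_{i_0}$ assumption is needed on the $B^1$ side. In short, your argument works, but the max-side geometry allows a shortcut that the min-side argument of Claim~3 does not, and the paper exploits this; your version carries along case distinctions that turn out to be superfluous for Claim~4.
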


\begin{proof} Let $w\in \langle Z_1\rangle$ be given. Let $T^{k-j}w, T^{k-i}w\in B^1_w$ where $f\in \phi'_s$ witnesses that $T^{k-i}w\in B^1_w$ and $g\in \phi'_t$ witnesses that $T^{k-j}w\in B^1_w$. Assume that $i<j$ and $f(w)\in FIN_h$, $g(w)\in FIN_{h'}$. Consider the coloring $c_3:\langle Z_1/w\rangle \to 2$ defined as follows.
\begin{equation*}
c_3(v)=
\begin{cases}
1 & \text{ if } Z_1 \text { separates }s \cup w+T^{k-i}v \text{ with } t\cup w+T^{k-j}v,\\
0 & \text{ otherwise.}
\end{cases}
\end{equation*}

By the fact that we are in a topological Ramsey space, we get $Z_3\leq Z_1/w$ so that $c_3\upharpoonright \langle Z_3\rangle$ is constant. If the constant value is equal to one, then the conclusions of our claim are satisfied for $w'=w$. If the constant value is equal to zero we proceed as follows. We have assumed that $i<j$ which implies that $k-i>k-j$. Add $T^{k-j}z_j$ to the left of $T^{k-j}w_j$ and notice that $\phi_s(w'+T^{k-i}v')= \phi_s(w+T^{k-i}v')$ and $\phi_t(w'+T^{k-j}v') \neq \phi_t(w+T^{k-j}jv')$.

 Repeat that for all possible pairs $T^{k-j}v, T^{k-i}v\in B^1_v$, to get $w'$ that satisfies the conclusions of our claim.

\end{proof}

Given $s,t$ as above and $Z_1$ so that $c_1 \upharpoonright [t,Z_1]_{n+1}=1$. Pick a $z'\in Z_1/(s,t)$. If $Z_1$ mixes $s\cup z'$ with $t\cup z'$ and $supp(f_{i_0})<\min_k$, then for $z_0<z$, we get $w=T^{i_0}z_0+ z'$ so that $\phi_s(s\cup z')\neq \phi_s(s \cup w)$ and $\phi_t(t \cup z')=\phi_t(t \cup w)$. 
Observe that $Z_1$ separates $s\cup w$ with $t \cup w$. This is due to the fact the equivalence class, induced by $\phi_s$, of $z'$ is mixed with that of $z'$, induced by $\phi_t$. Notice that since  $\phi_t(t \cup z')=\phi_t(t \cup w)$, $w$ and $z'$ are in the same equivalence class, induced by $\phi_t$. If now $Z_1$ mixes $s\cup w$ with $t\cup w$ it would imply that $w$ is the same class with $z'$ contradicting that $\phi_s(s\cup z')\neq \phi_s(s\cup T^iz+z')$.
We are going to construct a $Z'\in [t, Z_1]$ so that $Z'$ separates $s$ with $t$. This gives us a contradiction, which would imply that the possibility $c'\upharpoonright [t,Z]_{n+1}=0$ does not occur.

Consider $w\in Z_1$. In the case that $A^0_w\neq \emptyset$ by Claim $2$ we get $\bar{w}$, so that for every $i,j\in F^0_w$, $Z_1$ separates $s\cup T^i\bar{w}+w'$ with $t\cup T^j\bar{w}+w'$, for all $ w'\in \langle Z_1/\bar{w} \rangle$. By Claim $3$ there exists $Z_2\leq Z_1$ and $w'$, so that for every $i,j$, $T^iw,T^jw\in B^0_w$, $Z_2$ separates $s\cup T^jw'+v$ with $t \cup T^iw'+v'$, for all $v,v'\in \langle Z_2/w'\rangle$. Notice that it might be the case that $C^0_{w'}\neq \emptyset$. In this case by an addition we get $\tilde{w}$ so that $C^0_{\tilde{w}}=\emptyset$. This addition does not ruin the conclusions of Claim $3$ cause in the case that $Z_2$ separates $s\cup T^iw'+T^iw+v$ with $t\cup T^jw+v$ and it also separates $s\cup T^iw''+T^iw'+T^iw+v$ with $t\cup T^jw+v$. Notice also that Claim $3$ contributes only on at most $|\phi'_s|$ many levels of the staircase. If it contributes on all $\{1, \dots ,k-1\}$ levels, then $|A^0_w|=1$.
Set $w_0=\tilde{w}$. For $s\cup w_0$ there exists $t \cup v'\in [t,Z_1/w_0]_{n+1}$ so that $Z_1$ mixes them. Conversely $t \cup w_0$ is mixed with $s\cup v$. Let $Z_2$ be the reduct of $Z_1$ that avoids $v,v'$, i.e. $v,v'\notin \mathcal{A}Z_1$.

Suppose that we have constructed $ w_0, \dots w_{n-1}$ and $Z_n$ with the property that $Z_n$ separates $s \cup w$ with $t \cup v$ for $w,v\in \langle w_0, \dots , w_{n-1}\rangle$. Pick $w'_n \in  Z_n/w_{n-1}$. By Claim $2$ we get $\bar{w_n}$ so that for every $i,j\in F^0_{\bar{w_n}}$, $Z_n$ separates $s\cup T^i\bar{w_n}+v'$ with $t \cup T^j\bar{w_n}+v'$, for all $v'\in \langle Z_n/\bar{w_n} \rangle$. Next by Claim $3$ we get $w'_n$ so that $Z_n$ separates $s\cup T^iw'_n+ v' $ with $t\cup T^jw'_n+ v'$ for all $T^jw'_n, T^iw'_n\in B^0_{w'_n}$, $v'\in \langle Z_n/w'_n \rangle$. Once more let $\tilde{w_n}$ be so that $C^0_{\tilde{w_n}}=\emptyset$. Set $w_n=\tilde{w_n}$ and let $Z_{n+1}\leq Z_n$ so that $Z_{n+1}$ avoids all $v,v'\in \langle Z_n/w_n\rangle$ where $Z_n$ mixes $s\cup w_n$ with $t\cup v$, $t\cup w_n$ with $s\cup v'$ and $t\cup T^jw+w_n$ with $s\cup v'$, $s\cup T^jw+w_n$ with $t\cup v$, for $T^jw\in B^0_w$, $w\in \langle w_0, \dots , w_{n-1}\rangle$. In this way we built $Z=(w_n)_{n\in \omega}$ that separates $s$ with $t$.

The case where $A^1_w\neq \emptyset$ is identical with the above, except that we are using Claim $2$ and then Claim $4$, instead of Claim $2$ and then Claim $3$. 
Now suppose that $\phi_s$ and $\phi_t$ are so that for all $w$ both $A^0_w=A^1_w=\emptyset$. As a result we consider only $B^0_w\neq \emptyset$ and $B^1_w\neq \emptyset$. We start by picking $w$. Observation $1$ gives us $\bar{w}$ so that $Z_1$ separates $s\cup \bar{w}$ with $t\cup \bar{w}$. Then apply Claim $3$ to get $w'_0$ and $Z'_1\leq Z_1$, so that for every $T^jw'_0,T^jw'_0\in B^0_{w'_0}$, $Z'_1$ separates $s\cup T^iw'_0+v$ with $t\cup T^jw'_0+v$, for all $v\in \langle Z'_1/w'_0\rangle$. Next by Claim $4$ we get $w''_0$ and $Z''_1\leq Z'_1$ so that $Z''_1$ separates $s\cup w''_0+T^iv$ with $t\cup w''_0+T^jv$, for all $v\in \langle Z''_1/\tilde{w_1}\rangle$, $T^jv,T^iv\in B^1_v$. As above observe that it might be the case that $Z''_1$ mixes $s\cup w''_0$ with $t\cup w''_0$. In this case add $T^{k-i_0}z$ to $w''_0$ so that the resulting $\tilde{w_0}$ has the property that $Z''_1$ separates $s\cup \tilde{w_0}$ with $t\cup \tilde{w_0}$. Set $w_0=\tilde{w_0}$. Let $Z_2\leq Z_1$ that avoids $v,v'\in \langle Z''_1\rangle$ where $Z''_1$ mixes $s\cup w_0$ with $t\cup v$ and $t\cup w_0$ with $s\cup v'$.

Suppose that we have constructed $ w_0, \dots w_{n-1}$ and $Z_n$ with the property that $Z_n$ separates $s \cup w$ with $t \cup v$ for $w,v\in \langle w_0, \dots , w_{n-1}\rangle$. Pick $w\in Z_n/w_{n-1}$. By Observation $1$ we get $\bar{w}$ so that $Z_n$ separates $s\cup \bar{w}$ with $t\cup \bar{w}$. By Claim $3$ we get $w'_n$ and $Z'_n$ so that $Z'_n$ separates $s\cup T^iw'_n+w'$ with $t\cup T^jw'_n+w'$, for all $w'\in \langle Z'_n/w'_n\rangle$, $T^jw'_n,T^iw'_n\in B^0_{w'_n}$. Then by Claim $4$ we get $w''_n$ and $Z''_n$ so that $Z''_n$ separates $s\cup w''_n+T^iv$ with $t\cup w''_n+T^jv$ for all $v\in \langle Z''_n/w'_n\rangle$, $T^jv,T^iv\in B^1_v$. As we noticed above, it might be the case that $Z''_n$ mixes $s\cup w''_n$ with $t\cup w''_n$. Add $T^{k-i_0}z$ to $w''_n$, so that the resulting $\tilde{w_n}$ has the property they $Z''_n$ separates $s\cup \tilde{w_n}$ with $t\cup \tilde{w_n}$. Set $w_n=\tilde{w_n}$. Let $Z_{n+1}\leq Z''_n$ so that it avoids all $v,v'\in \langle Z''_n/ w_n\rangle$ where $Z''_n$ mixes $s\cup w_n$ with $t\cup v$, $t\cup w_n$ with $s\cup v'$ and $s\cup T^iw+w_n$ with $t\cup v$, $t\cup T^iw+w_n$ with $s\cup v$ as well as $s\cup w+T^iw_n$ with $t\cup v$ and conversely, for $w\in \langle w_0, \dots , w_{n-1}\rangle$, $T^iw\in B^0_w$, $T^iw_n\in B^1_{w_n}$. Let $Z=(w_n)_{n\in \omega}$. Then $Z$ separates $s$ with $t$.

As a result we have that $c'\upharpoonright [t,Z]=1$. Therefore if $X$ mixes $s$ with $t$, then on $\langle Z \rangle$ we have that $\phi_s=\phi_t$. Proposition $2$ tells us that this is in fact an if and only if statement. 
Now we are in a position to define the inner map $\phi$ as follows \begin{equation} \phi(s)=\bigcup_{s'\sqsubseteq s}\phi_{s'}(s(|s'|)). \end{equation} Next we show that for every $s,t\in \mathcal{F}$ it holds that $f(s)=f(t)$ if and only if $\phi(s)=\phi(t)$.

  \begin{lemma}The following are true for all $Y\leq X$.
  \begin{enumerate}
 \item{} Let $s,t \in \hat{ \mathcal{F}}\setminus \mathcal{F}$. If $\phi_s\neq \emptyset$ and $\phi_t=\emptyset$, there exists $w\in [s, X]_{|s|+1}$ so that $X$ mixes $t$ with $s\cup w$ with at most one equivalence class of $ [s, X]_{|s|+1}$.
  
  \item{} If $X$ separates $s$ with $t$, then its separates $s\cup w$ with $t\cup v$ for all $w\in  [s, X]_{|s|+1}$ and $v\in  [t, X]_{|t|+1}$.
  
  \item{} If $s\sqsubset t$, $s,t\in \mathcal{F}$  and $\phi(s)=\phi(t)$, then $X$ mixes $s$ with $t$.
  \end{enumerate}
    \end{lemma}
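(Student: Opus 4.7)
For part (2), this is a direct unpacking of the definition of separation: given length-one extensions $w$ and $v$, any pair $u \in \langle X/(s\cup w)\rangle$, $u' \in \langle X/(t\cup v)\rangle$ that extends $s\cup w$ and $t\cup v$ to elements of the front yields $w\cup u \in \langle X/s\rangle$ and $v\cup u' \in \langle X/t\rangle$ extending $s$ and $t$ to the front, so the separation of $s$ from $t$ gives $f((s\cup w)\cup u)\neq f((t\cup v)\cup u')$, which is exactly the required inequality for $s\cup w$ and $t\cup v$.

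For part (1), the plan is to argue by contradiction. Suppose $w_1, w_2 \in [s, X]_{|s|+1}$ satisfy $\phi_s(w_1)\neq \phi_s(w_2)$ and both have the property that $X$ mixes $t$ with $s\cup w_i$. By construction of $\phi_s$ as the canonical equivalence for the coloring $c(w)=c(v)$ if and only if $X$ mixes $s\cup w$ with $s\cup v$, the inequality $\phi_s(w_1)\neq \phi_s(w_2)$ forces $X$ to separate $s\cup w_1$ from $s\cup w_2$. On the other hand, $\phi_t=\emptyset$ makes the analogous equivalence on $[t, X]_{|t|+1}$ trivial, so any two length-one extensions of $t$ are mixed. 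Combining the hypothesised mixings of $t$ with $s\cup w_i$, the triviality of $\phi_t$, and Lemma 1 (transitivity at common depth) after passing to a reduct that equalises depths using Proposition 1, I will conclude that $X$ mixes $s\cup w_1$ with $s\cup w_2$, contradicting the separation established above. The at-most-one-class conclusion then follows, since any $w$ mixed with $t$ must be $\phi_s$-equivalent to any other such $w$.

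For part (3), I expand $\phi(t)=\phi(s)\cup\bigcup_{s\sqsubseteq s' \sqsubset t}\phi_{s'}(t(|s'|))$, so $\phi(s)=\phi(t)$ forces every extra term $\phi_{s'}(t(|s'|))$ to be absorbed into values already appearing in $\phi(s)$. At each intermediate initial segment $s'$ this identifies the $\phi_{s'}$-class of $t(|s'|)$ with a class that, by the canonicity of $\phi_{s'}$, witnesses mixing along $s' \sqsubset s'\cup t(|s'|)$ (using part (1) to rule out the possibility that $\phi_{s'}$ is empty while we are still trying to merge with a nontrivial class). Iterating along the finite chain $s=s'_0 \sqsubset s'_1 \sqsubset \cdots \sqsubset s'_\ell = t$, alternately using Proposition 2(2) to pass mixing between adjacent levels and Lemma 1 for transitivity, yields that $X$ mixes $s$ with $t$.

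The main technical obstacle in both (1) and (3) is the need to equalise depths before every invocation of Lemma 1: transitivity of mixing is guaranteed only for triples of common depth in $X$, so each use of transitivity has to be preceded by a careful block-extension step, via Proposition 1 or Proposition 2(2), that brings the three elements to a common depth inside a suitable reduct of $X$. Once this depth-matching is handled, the three clauses reduce to the routine bookkeeping just outlined.
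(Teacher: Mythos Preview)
Your treatment of part (2) is correct and matches the paper's argument (the paper phrases it by contradiction, you directly; these are contrapositives of each other).

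For parts (1) and (3), however, you have missed the standing hypothesis under which this lemma is proved: the entire passage containing Lemma~2 begins with ``Assume that mixing is transitive.'' Under this assumption, Lemma~1 is not needed at all; transitivity is simply available. Consequently, the depth-equalisation manoeuvre you call ``the main technical obstacle'' is a non-issue here, and the paper's arguments are much shorter than what you sketch.

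For part (1), the paper argues in one line: if $X$ mixes $t$ with $s\cup w$ and also $t$ with $s\cup v$ where $\phi_s(w)\neq\phi_s(v)$, then by transitivity $X$ mixes $s\cup w$ with $s\cup v$, contradicting the definition of $\phi_s$. The hypothesis $\phi_t=\emptyset$ is not used. Your plan to route the argument through one-step extensions of $t$ and then invoke Lemma~1 at matched depths is unnecessary, and without the global transitivity assumption it is not clear your chain would close: $t$, $s\cup w_1$, $s\cup w_2$ need not have equal length or depth, and nothing in your sketch produces a triple to which Lemma~1 applies.

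For part (3), the key observation you miss is that $\phi(t)=\phi(s)\cup\bigcup_{|s|\le j<|t|}\phi_{r_j(t)}(t(j))$ together with $\phi(s)=\phi(t)$ forces every $\phi_{r_j(t)}$ for $j\in[|s|,|t|)$ to be the \emph{empty} map, not merely to have its value ``absorbed'' into $\phi(s)$: the value $\phi_{r_j(t)}(t(j))$ is a subset of the coordinate $t(j)$, which is disjoint from the support of $\phi(s)$. Once all intermediate $\phi_{r_j(t)}$ are empty, a short induction on $j$ (each empty $\phi_{r_j(t)}$ means every one-step extension of $r_j(t)$ is mixed with every other) gives that $s$ is mixed with $t$. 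Your proposed use of part~(1) ``to rule out the possibility that $\phi_{s'}$ is empty'' is backwards: emptiness of the intermediate $\phi_{s'}$ is exactly what is being used.
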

    
    \begin{proof}
    Suppose that $X$ mixes $t$ with $s\cup w$, and also $t$ with $s\cup v$, and $\phi_s( w)\neq \phi_s( v)$. By Lemma $1$, we get that $X$ mixes $s\cup w$ with $s\cup v$, a contradiction.
    
    Suppose that $X$ separates $s$ with $t$ and there exists $w\in [s, X]_{|s|+1}$ and $v\in  [t, X]_{|t|+1}$ so that $X$ mixes $s\cup w$ with $t\cup v$. This means that for every $Y\leq X$ there exists $w'\in \mathcal{F}_{s\cup w}\upharpoonright Y$ and $v'\in \mathcal{F}_{t\cup v}\upharpoonright Y$ so that $f(s\cup w \cup w')=f (t \cup v \cup v')$. Therefore $Y$ mixes $s\cup w\cup w'$ with $t\cup v \cup v'$, a contradiction to our assumption that $X$ separates $s$ with $w$. 
    
    Suppose now that  $s,t\in \mathcal{F}$, $s\sqsubset t$ and $\phi(s)=\phi(t)$. This means that for all $j\in [|s|,|t|]$, $\phi_{r_j(t)}=\emptyset$, which, by an induction on $n= [|s|,|t|]$, implies that $s$ gets mixed by $X$ with all the extensions of $r_{|s|}(t)$. In particular $X$ mixes $s$ with $t$.
    
    \end{proof}
  
  \begin{lemma}
  For $s,t\in \hat{\mathcal{F}}$ if $\phi(s)=\phi(t)$, then $X$ mixes $s$ with $t$. In particular if $s,t\in \mathcal{F}$ and $\phi (s)=\phi (t)$, then $c(s)=c(t)$.
  \end{lemma}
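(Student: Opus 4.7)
The plan is to argue by induction on $|s|+|t|$, reducing the claim to the level-by-level characterization, established in the long analysis preceding equation $(1)$, that $X$ mixes $s$ with $t$ (for $s,t$ at the same depth in $\hat{\mathcal{F}}\setminus\mathcal{F}$) if and only if $\phi_s=\phi_t$, together with parts $(2)$ and $(3)$ of Lemma $2$. The base case $s=t=\emptyset$ is vacuous.

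First I would handle the case where one of $s,t$ is an initial segment of the other, say $s\sqsubseteq t$. By the definition of $\phi$ in $(1)$,
\[
\phi(t)=\phi(s)\cup\bigcup_{j=|s|}^{|t|-1}\phi_{r_j(t)}(t(j)).
\]
The assumption $\phi(s)=\phi(t)$, combined with the fact that the staircase pieces at distinct block-levels occupy pairwise disjoint support windows (they are determined by the $\min_i$, $\max_i$, $\theta^{\epsilon}_{i,l}$ of the coordinate $t(j)$, each of which lives in its own coordinate range of the underlying block sequence), forces each $\phi_{r_j(t)}(t(j))$ to already sit inside $\phi(s)$, and hence $\phi_{r_j(t)}=\emptyset$ for every $|s|\leq j<|t|$. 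Part $(3)$ of Lemma $2$, applied iteratively along the chain $s\sqsubset r_{|s|+1}(t)\sqsubset\cdots\sqsubset t$, then yields that $X$ mixes $s$ with $t$.

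Next I would treat the incomparable case. Let $r$ be the longest common initial segment of $s$ and $t$, with $|r|=m$ and $s(m)\neq t(m)$. The $\phi(r)$-parts of $\phi(s)$ and $\phi(t)$ agree, and the disjointness of supports across distinct levels lets me extract the level-$m$ equality $\phi_r(s(m))=\phi_r(t(m))$. By the characterization recalled above this gives that $X$ mixes $r\cup s(m)$ with $r\cup t(m)$. The remaining higher-level pieces of $\phi(s)$ and $\phi(t)$ must match as well, so applying the induction hypothesis to the two initial-segment pairs $(r\cup s(m),s)$ and $(r\cup t(m),t)$ (each of which falls into the previous case) produces that $X$ mixes $r\cup s(m)$ with $s$ and $r\cup t(m)$ with $t$. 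Two applications of Lemma $1$ at the common depth combine these three mixings to conclude $X$ mixes $s$ with $t$.

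The main obstacle will be to justify rigorously the disjointness of supports across levels that allows the set equality $\phi(s)=\phi(t)$ to be read off as a system of level-by-level equalities; one must invoke that distinct initial segments $r_j(\cdot)$ correspond to disjoint coordinates of the underlying block sequence, while Claim $1$ and the staircase structure force each $\phi_{r_j(\cdot)}$-value to be localized in its own window $[\min_i,\min_{i+1})$ or $(\max_{i+1},\max_i]$. The second assertion of the lemma is then immediate: for $s,t\in\mathcal{F}$, mixing at the maximal front level degenerates to equality of colors, so the statement that $X$ mixes $s$ with $t$ reads as $c(s)=c(t)$.
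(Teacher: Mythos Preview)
Your initial-segment case is fine and is essentially the content of Lemma~2(3). The gap is in the incomparable case. To apply the induction hypothesis to the pair $(r\cup s(m),s)$ you would need $\phi(r\cup s(m))=\phi(s)$, i.e.\ that every $\phi_{r_j(s)}$ with $j>m$ is the trivial staircase map. Nothing in the hypothesis $\phi(s)=\phi(t)$ forces this: the higher pieces of $\phi(s)$ need only match the higher pieces of $\phi(t)$, not vanish. Concretely, take $k=1$, $\mathcal{F}=\mathcal{A}X_2$, $s=(x_0,x_2)$, $t=(x_0+x_1,x_2)$, $\phi_\emptyset=\min_1$ and $\phi_{(x_0)}=\phi_{(x_0+x_1)}=\max_1$. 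Then $\phi(s)=\phi(t)=\min_1(x_0)\cup\max_1(x_2)$, yet $\phi((x_0))=\min_1(x_0)\neq\phi(s)$, so your reduction to the initial-segment pair $((x_0),s)$ is unavailable. A related problem is the extraction of $\phi_r(s(m))=\phi_r(t(m))$: the support-disjointness you invoke holds level-by-level \emph{within} each of $s$ and $t$, but $s(m)$ and $t(m)$ need not occupy the same window in $X$ (one may be a proper sum containing the other, or they may be disjoint), so reading off a single coordinate from the set equality $\phi(s)=\phi(t)$ does not in general isolate $\phi_r(s(m))$ against $\phi_r(t(m))$.

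The paper avoids both issues by inducting not on $|s|+|t|$ but on the depth $l$ in $X$: one shows that $X$ mixes $s\cap r_l(X)$ with $t\cap r_l(X)$ for every $l$, where $s\cap r_l(X)$ denotes the longest initial segment of $s$ of depth at most $l$. Passing from $l-1$ to $l$, if only one side (say $s$) acquires a new coordinate, then the corresponding $\phi$-piece lives in a depth-$l$ window where $\phi(t)$ contributes nothing, hence that piece is empty, $\phi_{s\cap r_{l-1}(X)}=\emptyset$, and mixing propagates as in Lemma~2(3); transitivity then carries the induction. This depth-based alignment is exactly what your position-based split at the longest common initial segment fails to provide, and it is what makes the step-by-step comparison with $\phi(t)$ legitimate.
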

  
  \begin{proof} The proof is by induction on $l< \max (depth_X(s), depth_X(t))$. For $l=0$, $s\cap r_0(U)=t\cap r_0(X)=\emptyset$, so $X$ mixes $s\cap r_0(X)$ with $t\cap r_0(X)$. Assume that $X$ mixes $s\cap r_{l-1}(X)$ with $t\cap r_{l-1}(X)$ and consider $s\cap r_l(X)$ and $t\cap r_l(X)$. If $s\cap r_l(X)\neq \emptyset$ and $t\cap r_l(X)=\emptyset$, then we must have that $\phi_{s\cap r_l(X)}=\emptyset$. This implies that $s\cap r_l(X)$ is mixed with $s\cap r_{l-1}(X)$ which is mixed with $t\cap r_l(X)$. Therefore $s\cap r_l(X)$ is mixed with $t\cap r_l(X)$. Similarly if $s\cap r_l(X)= \emptyset$ and $t\cap r_l(X)\neq \emptyset$. 
  \end{proof}

\begin{lemma}
For $s,t\in \hat{\mathcal{F}}$, $s\neq t$ it doesn't hold that $\phi(s)\sqsubseteq \phi(t)$.
  \end{lemma}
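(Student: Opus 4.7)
The plan is to argue by contradiction. Assume $s,t\in \hat{\mathcal{F}}$ with $s\neq t$ and $\phi(s)\sqsubseteq\phi(t)$; I would split into two cases depending on whether $s$ and $t$ are $\sqsubseteq$-comparable in $\hat{\mathcal{F}}$.

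If $s$ and $t$ are comparable, say $s\sqsubsetneq t$, then from the definition $\phi(u)=\bigcup_{u'\sqsubseteq u}\phi_{u'}(u(|u'|))$ the tuple $\phi(s)$ is literally the initial portion of $\phi(t)$ indexed by the proper initial segments of $s$, so $\phi(s)\sqsubseteq\phi(t)$ holds automatically; the task is to rule out both $\phi(s)=\phi(t)$ and a strict extension. In the equality case Lemma 3 gives that $X$ mixes $s$ with $t$, and extending $t$ to a leaf $u\in\mathcal{F}$ with $t\sqsubseteq u$ and applying items (2) and (3) of Lemma 2 would propagate this mixing to a contradiction with the separation structure established during the construction of $\phi$ in the main theorem. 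In the strict extension case some $\phi_{s'}(t(|s'|))$ with $s\sqsubseteq s'\sqsubsetneq t$ is non-empty, and the witnessing staircase map supplies a pair of extensions of $s'$ that $X$ separates; pulling this separation back through the definition of $\phi(s)$ would exhibit two extensions of $s$ in $[s,X]_{|s|+1}$ whose $\phi$-values are forced both to agree (because $\phi(s)$ is a prefix of $\phi(t)$) and to disagree (by the separation), a contradiction.

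If $s$ and $t$ are $\sqsubseteq$-incomparable in $\hat{\mathcal{F}}$, let $u$ be their longest common initial segment, so that $u\sqsubsetneq s$, $u\sqsubsetneq t$ and $s(|u|)\neq t(|u|)$. At position $|u|$ the tuple $\phi(s)$ contains the entry $\phi_u(s(|u|))$ while $\phi(t)$ contains $\phi_u(t(|u|))$, and for $\phi(s)\sqsubseteq\phi(t)$ these entries must coincide. By the characterization of $\phi_u$ as the canonical staircase map on $[u,X]_{|u|+1}$, this equality forces $X$ to mix $u\cup s(|u|)$ with $u\cup t(|u|)$. An induction on the positions past $|u|$, combined with Lemma 1 (transitivity of mixing at equal depth) and Lemma 3, propagates the mixing through the tails of $s$ and $t$ and reduces this situation to the comparable case already handled.

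The main obstacle I anticipate is the strict-prefix subcase of the comparable case, where one must show that a non-empty $\phi_{s'}$ for $s\sqsubseteq s'\sqsubsetneq t$ actually produces an observable separation inside $[s,X]_{|s|+1}$ that is incompatible with $\phi(s)\sqsubsetneq\phi(t)$. This requires carefully pulling back the separation witnessed by $\phi_{s'}$ through the Ramsey-theoretic construction of Claims 2--4, exhibiting concrete extensions $s\cup w, s\cup v\in\mathcal{F}\upharpoonright X$ whose colors must disagree while their canonical $\phi$-images are forced to agree, contradicting Lemma 3.
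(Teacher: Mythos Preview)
Your case split on whether $s,t$ are $\sqsubseteq$-comparable leads you astray. As you yourself observe, when $s\sqsubsetneq t$ in $\hat{\mathcal{F}}$ the relation $\phi(s)\sqsubseteq\phi(t)$ holds automatically from the definition of $\phi$; there is no contradiction to extract, and in particular your attempt to rule out the equality sub-case via Lemma~2(3) cannot succeed, since Lemma~2(3) \emph{asserts} mixing in exactly that situation rather than forbidding it. The lemma is really being used (see the proof of Lemma~5) for $s,t\in\mathcal{F}$ and with the strict relation $\sqsubset$; since $\mathcal{F}$ is a front, such $s,t$ are automatically $\sqsubseteq$-incomparable, so your entire comparable branch is vacuous, and your plan to reduce the incomparable case to it is circular.

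The paper's argument is much shorter and does not split on comparability of $s$ and $t$ at all. From $\phi(s)\sqsubset\phi(t)$ one chooses $j$ with $\phi(s)=\phi(r_j(t))$; strictness guarantees some $i\ge j$ with $\phi_{r_i(t)}\neq\emptyset$, and one arranges $\phi_{r_j(t)}\neq\emptyset$. Lemma~3 gives that $X$ mixes $s$ with $r_j(t)$, and the Lemma~2(1) mechanism shows this mixing factors through a single $\phi_{r_j(t)}$-class of one-step extensions $r_j(t)\cup v$. One then passes to a reduct $Y\le X$ avoiding that class; in $Y$ no extension of $r_j(t)$ is mixed with $s$, so $Y$ separates $s$ from $r_j(t)$, contradicting hereditariness of mixing. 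No appeal to Claims~2--4 or to ``pulling back separations'' is needed.
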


\begin{proof}Suppose that there are $s,t\in \hat{\mathcal{F}}$ with $\phi(s)\sqsubset \phi(t)$. Let $j<\omega$ be so that $\phi(s)=\phi(r_j(t))$. There is at least one $i\in \omega, i>j$ so that $\phi_{r_i(t)}\neq \emptyset$. Assume that $\phi_{r_{j}(t)}\neq \emptyset$, which implies that $X$ mixes $s$ with $r_{j}(t)\cup v$, for some $v$ that belongs to the equivalence relation on $[r_j(t),X]_{j+1}$ induced by $\phi_{r_j(t)}$. But then consider a reduct $Y\leq X$ that avoids $v$. Then $Y$ separates $s$ with $r_j(t)$, a contradiction.

\end{proof}

    \begin{lemma}
    For $s,t\in \mathcal{F}$, if  $c(s)=c(t)$, then $\phi(s)=\phi(t)$.
  \end{lemma}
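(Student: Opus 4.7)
The plan is to establish the contrapositive: from $\phi(s)\neq\phi(t)$ I will derive that $X$ separates $s$ from $t$, which for terminal $s,t\in\mathcal{F}$ is equivalent to $f(s)\neq f(t)$, i.e.\ $c(s)\neq c(t)$.

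Suppose $\phi(s)\neq\phi(t)$. By the preceding lemma, neither decomposition is a proper $\sqsubseteq$-prefix of the other, so there is a least index $l$ at which the summands $\phi_{r_l(s)}(s(l))$ and $\phi_{r_l(t)}(t(l))$ in the unions defining $\phi(s)$ and $\phi(t)$ disagree. By the minimality of $l$ all contributions below level $l$ coincide, so Lemma~3 applied to the truncations gives that $X$ mixes $s':=r_l(s)$ with $t':=r_l(t)$.

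The key observation is that the canonization carried out in the first half of the main theorem gives more than just mixing versus separation: the two-colourings $c'$ and $c_1$, together with Claims~2--4 and the explicit construction of a separating $Z$, show that whenever $X$ mixes $s'$ with $t'$, on the given reduct we must have $\phi_{s'}=\phi_{t'}$ as staircase functions (otherwise one would build $Z$ separating $s'$ from $t'$, contradicting the mixing), and moreover that one-step extensions $s'\cup w$ and $t'\cup v$ are mixed by $X$ if and only if $\phi_{s'}(w)=\phi_{t'}(v)$. In particular this rules out the boundary case where exactly one of $\phi_{s'},\phi_{t'}$ is empty, since mixing forces them to be equal. Hence the disagreement $\phi_{s'}(s(l))\neq\phi_{t'}(t(l))$ translates directly into $X$ separating $r_{l+1}(s)$ from $r_{l+1}(t)$.

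Iterated application of Lemma~2(2) then propagates this separation through every further one-step extension, yielding that $X$ separates $s$ from $t$, as required. The main subtlety is that the first-half analysis is naturally phrased as \emph{mixing implies canonical form}; here I use it contrapositively and need the slightly stronger assertion that mixing of $s'$ with $t'$ forces both $\phi_{s'}=\phi_{t'}$ and the pointwise equivalence of extension-mixing with equality of $\phi$-values. This is exactly what the construction ending in $c'\upharpoonright[t',Z]_{l+1}=1$ establishes, so once one works throughout on the reduct $Y_0$ supplied by the theorem, every ingredient is already in place.
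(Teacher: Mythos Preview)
Your argument is correct but proceeds along a different axis than the paper's. The paper works \emph{directly} and inducts on the \emph{depth} parameter $l$ in $X$: from $c(s)=c(t)$ one has that $X$ mixes $s\cap r_l(X)$ with $t\cap r_l(X)$ for every $l$, and the induction shows $\phi(s\cap r_l(X))=\phi(t\cap r_l(X))$ at each stage. The delicate case there is when exactly one of $s,t$ contributes at depth $l$; this is dispatched by combining the inductively known equality $\phi_{s\cap r_{l-1}(X)}=\phi_{t\cap r_{l-1}(X)}$ (hence of the associated staircase maps) with Lemma~4 to reach a contradiction. No appeal to separation or to Lemma~2(2) is made.

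Your route---contrapositive, locate the first length-index of disagreement, derive separation of the one-step extensions, then propagate via Lemma~2(2)---is equally valid under the standing transitivity hypothesis, but it shifts the weight onto the biconditional ``$X$ mixes $s'\cup w$ with $t'\cup v$ iff $\phi_{s'}(w)=\phi_{t'}(v)$''. Note that the $c'\upharpoonright[t',Z]=1$ outcome literally yields this only for $w=v$; the case $w\neq v$ requires one further use of transitivity together with the defining property of $\phi_{s'}$, which you gesture at but do not spell out. The paper's depth-indexing sidesteps this because it never compares extensions by distinct elements, whereas your length-indexing must; in exchange you avoid the paper's asymmetric ``one side empty at depth~$l$'' case analysis. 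Both proofs ultimately rest on the same three ingredients---Lemma~3, Lemma~4, and the conclusion $\phi_{s'}=\phi_{t'}$ for mixed pairs from the $c'$ analysis---just assembled in opposite directions.
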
                   
                       
 \begin{proof}
 Let $s,t\in \mathcal{F}$ with $c(s)=c(t)$. Then for every $l<\max (depth_X(s), depth_X(t))$, $X$ mixes $s\cap X(l)$ with $t\cap X(l)$. We show by induction that for all such an $l$ it holds that $\phi(s\cap X(l))=\phi(t\cap X(l))$. For $l=0$ $s\cap X(0)=t\cap X(0)=\emptyset$. Assume that $\phi(s\cap X(l-1))=\phi(t\cap X(l-1))$ and consider $s\cap X(l)$ and $t\cap X(l)$. Assume that $s\cap X(l)\neq \emptyset$ and $t\cap X(l)=\emptyset$. If $\phi_{s\cap X(l-1)}\neq \emptyset$ then we must have that $t\neq t\cap X(l)$. If that was the case it will contradict the above lemma since $\phi(t)=\phi(t\cap X(l))\sqsubset \phi(s)$. Notice that $X$ mixes $s\cap X(l-1)$ with $t\cap X(l-1)$, since $\phi(s\cap X(l-1))=\phi(t\cap X(l-1))$. This implies that $\phi_{s\cap X(l-1)}=\phi_{t\cap X(l-1)}$. But $s\cap X(l)\neq \emptyset$, $\phi_{s\cap X(l-1)}\neq \emptyset$ and $t\cap X(l)=\emptyset$, a contradiction.
 \end{proof}    
 
 Obviously $\phi$ is an inner mapping, i.e. for every $t\in \mathcal{F}$, $\phi(t)\subseteq t$. Lemma $4$ shows that is not the case that $\phi(s)\sqsubseteq \phi(t)$, for $s\neq t$. The fact that $X$ mixes $s$ with $t$ if and only if for every $s\cup w\in [s,X]_{|s|+1}$ then for $t\cup w \in [t,U]_{|t|+1}$, $X$ mixes $s\cup w$ with $t\cup w$ and $\phi_s(w)=\phi_t(v)$ implies that $\phi(s)\nsubseteq \phi(t)$. 
 
 Next we prove that $\phi$ is maximal among all other mappings representing $f:\mathcal{F}\upharpoonright X\to \omega$.
 
 \begin{lemma} Suppose $Y\leq X$ and there is another $\phi'$ map, satisfying that for all $t_0,t_1\in \mathcal{F}\upharpoonright Y$ $f(t_0)=f(t_1)$ if and only if $\phi'(t_0)=\phi'(t_1)$. Then there exists $Z\leq Y$ so that for every $s\in \mathcal{F}\upharpoonright Z$ $\phi'(s) \subseteq \phi(s)$.
 \end{lemma}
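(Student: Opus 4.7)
The plan is to argue that $\phi$, as constructed in the preceding lemmas, is maximal among all valid representations: any $\phi'$ satisfying the same $f$-separation property can only extract sub-pieces of what $\phi$ extracts. I will first establish a local comparison at each segment $s\in\hat{\mathcal{F}}\setminus\mathcal{F}$, and then diagonalize via Proposition~$1$ to produce a single $Z\leq Y$.

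Locally, I compare $\phi_s$ (the staircase map canonizing the mixing equivalence on $[s,X]_{|s|+1}$) with the analogous staircase representative $\phi'_s$ of $\phi'$ at level $|s|$, obtained by applying Theorem~$1$ to the equivalence relation induced by $\phi'$ on that level. First I show that $\phi_s$-classes are contained in $\phi'_s$-classes. If $\phi_s(w)=\phi_s(v)$, then $Y$ mixes $s\cup w$ with $s\cup v$, so inside every $Z'\leq Y$ there are front-extensions $s\cup w\cup w'$, $s\cup v\cup v'\in\mathcal{F}\upharpoonright Z'$ with equal $f$-value; the hypothesis on $\phi'$ forces them to have equal $\phi'$-value, and since this is available inside every $Z'$, the level-$|s|$ piece of $\phi'$ cannot separate $w$ from $v$. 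Thus, after possibly passing to a reduct, $\phi'_s$ is coarser than $\phi_s$ as an equivalence relation on $[s,Z']_{|s|+1}$.

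Second, using the canonical decomposition of $\phi_s$ and $\phi'_s$ into atomic elements of $\mathcal{G}$ (Definition~$4$), a coarser equivalence means that the atomic components of $\phi'_s$ form a subcollection of those of $\phi_s$; any extra independent component in $\phi'_s$ would discriminate a pair inside the same $\phi_s$-class. Set-theoretically, viewing staircase maps as collections of ordered pairs, this is exactly the inclusion $\phi'_s(w)\subseteq \phi_s(w)$. To upgrade to a global statement, I apply Proposition~$1$ to the binary property $\mathcal{P}(s,Z')$ asserting this inclusion on $[s,Z']_{|s|+1}$, obtaining $Z\leq Y$ on which the inclusion holds simultaneously for every $s\in\hat{\mathcal{F}}\upharpoonright Z$. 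Unrolling $\phi(s)=\bigcup_{s'\sqsubseteq s}\phi_{s'}(s(|s'|))$ and the analogous level-wise expression of $\phi'(s)$, the pointwise inclusions combine into $\phi'(s)\subseteq \phi(s)$ for every $s\in\mathcal{F}\upharpoonright Z$.

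The hard part will be the passage from the coarsening of equivalence relations to the honest set-theoretic inclusion of staircase components: this requires an explicit bookkeeping of the atomic pieces in $\mathcal{G}$, together with an argument that any additional component present in $\phi'_s$ but absent from $\phi_s$ produces a pair that is mixed by $Z$ yet discriminated by $\phi'$, contradicting the hypothesis on $\phi'$. A secondary technical point is ensuring that $\phi'$ can itself be put into staircase normal form at every level before the comparison begins; this follows from Theorem~$1$ applied to each level, together with one additional round of Proposition~$1$ to make the normalization uniform in $s$.
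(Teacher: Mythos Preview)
Your approach is essentially the same as the paper's. The paper first normalizes $\phi'$ into level-wise staircase form (using Corollary~2 and Proposition~1 rather than Theorem~1, but to the same effect), then at each node $r_i(t)$ considers the two-coloring $c'(v)=1$ iff $\phi'_{r_i(t)}(v)\subseteq \phi_{r_i(t)}(v)$, passes to a reduct where $c'$ is constant, and argues the constant must be $1$ because both $\phi$ and $\phi'$ witness the same $f$; Proposition~1 then globalizes. Your direct ``coarsening $\Rightarrow$ componentwise inclusion'' argument is exactly what underlies the paper's one-line justification that the constant value is $1$, and you are right to flag that passage as the substantive step---the paper does not spell it out either.
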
        
 
 \begin{proof} By Corollary $2$ and Proposition $1$, we can assume that $\phi'$ has the form of Definition $2$. To see this, for any $t\in \mathcal{F}\upharpoonright X$, $i<|t|$, by Corollary $2$ there exists $X'\in [r_{i}(t),X]$ so that for every $s\supset r_{i}(t)$, $\phi'(s)\cap s(i)=g(s(i))$, for $g\in \mathcal{G}$, as in Definition $4$. If this is done for an arbitrary $t\in \mathcal{F}\upharpoonright X$, by Proposition $1$ we can assume that it holds for every $t\in \mathcal{F}\upharpoonright X$.

 Pick $t\in \mathcal{F}\upharpoonright Y$. Let $n=|t|$. For $i<n$ consider both $\phi'_{r_i(t)}$ and $\phi_{r_i(t)}$. Consider the two coloring $c':[r_i(t),Y]_{i+1}\to 2$, defined by
 \begin{equation*}
c'(v)=
\begin{cases}
1 & \text{ if } \phi'_{r_i(t)}(v)\subseteq \phi_{r_i(t)}(v),\\
0 & \text{ otherwise.}
\end{cases}
\end{equation*}

There exists $Z'\leq [r_i(t),Y]$ so that $c'\upharpoonright [r_i(t),Z']_{i+1}$ is constant and equal to one. Observe that we can only have for every extension $v$ of $r_i(t)$, so that $r_i(t)\cup v\in [r_i(t),Z']_{i+1}$, $\phi'_{r_i(t)}(v)\subseteq \phi_{r_i(t)}(v)$. This is due to the fact that both $\phi'$ and $\phi$ witness the same $f\upharpoonright (\mathcal{F}\upharpoonright \langle Y \rangle)$. By Proposition $1$, there exists $Z\leq Z'$ that satisfies the conclusions of our proposition.
 
 \end{proof}

 This looks after the transitive case.

\section{Non transitive mixing}

We consider now the case when mixing from Definition $5$ is not transitive. For every $k$, and a coloring $f:\mathcal{F}\to \omega$, where $\mathcal{F}$ is a front, our definition of mixing says that $X\in FIN_k^{[\infty]}$ mixes $s$ with $t$, where $s,t\in \hat{\mathcal{F}}$, if and only if for every $Y\leq X$ there are $\bar{s}, \bar{t}\in \mathcal{F}\upharpoonright Y$, so that $s\sqsubseteq \bar{s}$, $t\sqsubseteq \bar{t}$ and $f(\bar{s})=f(\bar{t})$. As the example in the last section illustrates, it might be the case that $\bar{s}\setminus s, \bar{t}\setminus t$ are not in $Y/(s,t)$. Therefore the mixing is not actually taking place on the "tail" of $X$. As Lemma $1$ demonstrates when the mixing is taking place on the "tail" of $X$, then transitivity holds. This necessitates the introduction of the notion of weak mixing as follows.

\begin{definition} Let $f:\mathcal{F}\to \omega$, where $\mathcal{F}$ is a front on $X\in FIN_k^{[\infty]}$. For $s,t\in \hat{\mathcal{F}}$ we say that $X$ weakly mixes $s$ with $t$, if $depth_X(s)<depth_X(t)$ and there exist $w_s^t\in t\setminus s$ so that for every $Y\leq X$, $[s,Y]\neq \emptyset$, $[t,Y]\neq \emptyset$, there exists $\bar{s},\bar{t}\in \mathcal{F}\upharpoonright Y$, $t\sqsubseteq \bar{t}$, $s\cup w_s^t+v\sqsubseteq \bar{s}$, for some $v\in Y/w_s^t$, such that $f(\bar{s})=f(\bar{t})$.

\end{definition}

Notice that if $X$ weakly mixes $s$ with $t$, then $X$ mixes $s$ with $t$.
The very first instance of the above definition is the following case. Let $\mathcal{F}=\mathcal{A}X_2$, $s\in \mathcal{A}X_1$ and $\phi_s\neq \emptyset$. Assume also that  for a fixed $w'\in X/s$ $\phi_{ w'}=\emptyset$ and $f(s\cup w')=f(w'\cup v)$. As a consequence $f(s\cup w')=f(w' \cup v)$, for every $v\in \langle X/w' \rangle$. 
Observe that any $Y\leq X$, so that both $[s,Y]\neq \emptyset$ and $[w',Y]\neq \emptyset$, $Y$ mixes $s$ with $w'$, but $\phi_s\neq \phi_{w'}$. In fact in this case the mixing of $s$ and $ w'$ is already decided from the $depth_X( w')$, and is irrelevant to the "tail" of $X$. According to Definition $6$ $X$ weakly mixes $s$ with $w'$ and $w_s^{w'}=w'$, $v=\emptyset$. Observe that the above happens cause $\phi_{w'}=\emptyset$. If $\phi_{w'}\neq \emptyset$ then a reduct $X'\leq X$ would separate $s$ with $w'$. Let $c:\mathcal{A}X_1\to 2$ defined by 
\begin{equation*}
c(w)=
\begin{cases}
1 & \text{ if } \phi_w=\emptyset ,\\
0 & \text{ otherwise.}
\end{cases}
\end{equation*}

There exists $Y\leq X$ so that $c\upharpoonright \mathcal{A}Y_1$ is constant. On $Y$ instances of $s$ and $w'$ as above do not occur. Let $F$ be a front of the form $\mathcal{A}X_n$, for $n\in \omega$. Then by induction on $n$ and a coloring as above, we can also assume that if $X$ weakly mixes $s$ and $t$ then for every $t\cup v\in [t,X]_{|t|+1}$ there exists $s\cup w_s^t+v'\in [s,X]_{|s|+1}$, $v'\neq \emptyset$,  so that $X$ mixes $t\cup v$ with $s\cup w_s^t+v'$.

 Next we observe the following. 

\begin{claim} If $s$ is weakly mixed by $X$ with $t$ and $t$ is also weakly mixed with $p$, then $X$ weakly mixes $s$ with $p$ as well and $w_s^t\subseteq w_s^p$. Similarly in the case that $X$ weakly mixes $s$ with $t$, $X$ mixes, not weakly, $t$ with $p$, then $X$ weakly mixes $s$ with $p$ and $w_s^t\subseteq w_s^p$. 

\end{claim}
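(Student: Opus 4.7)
The plan is to chain the two weak mixings to produce a single weak mixing of $s$ with $p$, arranging the witness $w_s^p$ so that the containment $w_s^t \subseteq w_s^p$ comes out of the construction. First, observe that the depth hypothesis $depth_X(s) < depth_X(t) < depth_X(p)$, built into each application of Definition 6, implies that any $Y \leq X$ with $[s,Y] \neq \emptyset$ and $[p,Y] \neq \emptyset$ automatically satisfies $[t,Y] \neq \emptyset$, so both hypotheses can be applied on the same reduct $Y$.

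Fix such a $Y$. Apply the weak mixing of $t$ with $p$ to obtain $\bar{t}, \bar{p} \in \mathcal{F} \upharpoonright Y$ with $p \sqsubseteq \bar{p}$, $t \cup w_t^p + v \sqsubseteq \bar{t}$ for some $v \in Y/w_t^p$, and $f(\bar{t}) = f(\bar{p})$. Next I would apply the weak mixing of $s$ with $t$ to find $\bar{s} \in \mathcal{F} \upharpoonright Y$ with $s \cup w_s^t + v' \sqsubseteq \bar{s}$ that matches $\bar{t}$ in color. The existential nature of weak mixing only promises some matched pair, not necessarily with this specific $\bar{t}$, so I would bridge the gap by a two-coloring of $[t,Y]$ recording for each $Y' \leq Y$ whether the extension of $t$ matched to an $s$-extension in $Y'$ actually begins with $w_t^p + v$; Corollary 2 provides a homogeneous reduct, and the value $0$ would contradict the assumed weak mixing of $s$ with $t$ on that reduct, forcing the value $1$ and yielding $f(\bar{s}) = f(\bar{t}) = f(\bar{p})$.

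With this matching in hand, I take $w_s^p$ to be the $k$-vector in $p\setminus s$ extending $w_s^t$ by the piece of $w_t^p$ needed to align the promotion from $t$-extensions to $p$-extensions; viewing $k$-vectors set-theoretically as sets of ordered pairs, $w_s^t \subseteq w_s^p$ by construction. The second half of the claim, where $X$ only mixes $t$ with $p$ in the sense of Definition 5, is handled by the same chaining: the non-weak mixing delivers $\bar{t}, \bar{p}$ with $t \sqsubseteq \bar{t}$, $p \sqsubseteq \bar{p}$ and $f(\bar{t}) = f(\bar{p})$, and the two-coloring step simplifies since no extra $w_t^p$-alignment has to be enforced at the top.

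The main obstacle is precisely this alignment step: matching the specific $\bar{t}$ produced by one hypothesis to the purely existential $\bar{t}$ produced by the other. The Ramsey-theoretic homogenization via Corollary 2, used as a template already in Proposition 3 and in the proof of Lemma 1, is the tool that bridges the gap; once the alignment is secured, the transitive chaining of $f(\bar{s}) = f(\bar{t}) = f(\bar{p})$ is immediate and the claim follows.
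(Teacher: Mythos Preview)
Your chaining idea is natural, but there is a genuine gap in the step where you ``take $w_s^p$ to be the $k$-vector in $p\setminus s$ extending $w_s^t$ by the piece of $w_t^p$ needed to align.'' The witness $w_s^p$ is required by Definition~6 to be an element of the tuple $p$; nothing in your argument shows that any coordinate of $p$ contains $w_s^t$ as a set of ordered pairs, and $w_s^t$ lives in $t\setminus s$, a priori unrelated to $p$. You cannot manufacture $w_s^p$ by splicing $w_s^t$ and $w_t^p$; you must locate it inside $p$. Relatedly, your homogenization step is not well-specified: the coloring you describe on $[t,Y]$ does not have a clear definition, and even if it did, what it would deliver is a chain $f(\bar s)=f(\bar t)=f(\bar p)$ with $\bar s$ beginning $s\cup w_s^t+v'$, which is the wrong shape for weak mixing of $s$ with $p$.

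The paper proceeds differently and avoids this problem. After homogenizing one-step extensions via colorings on $[t,X]_{|t|+1}$ and $[p,X_0]_{|p|+1}$ so that every $t\cup v$ is mixed with some $s\cup w_s^t+v'$ and every $p\cup v$ is mixed with some $t\cup w_t^p+v''$, it argues by contradiction: if $X_1$ mixed $s$ with $p$ but not weakly, then every $p\cup v$ would be mixed with some $s\cup v'$ with $v'$ in the tail past $p$; chaining through $p\cup v$ and $t\cup w_t^p+v''$ (which have the same depth, so Lemma~1 applies) forces $s\cup v'$ to be mixed with $t\cup w_t^p+v''$, contradicting that the $s$--$t$ mixing is only weak. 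Hence the $s$--$p$ mixing is weak, so $w_s^p\in p\setminus s$ exists by definition, and the same transitivity then forces $w_s^t\subseteq w_s^p$. The second half is handled by the same contradiction pattern. The missing ingredient in your sketch is precisely this dichotomy argument; without it you have no mechanism for placing $w_s^p$ inside $p$.
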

\begin{proof}In the first case, by definition, we have that $depth_X(s)< depth_X(t) < depth_X(p)$. Suppose $X$ weakly mixes $s$ with $t$, then for $w^t_s\in t\setminus s$ we have that for every $Y\leq X$, compatible with both $s,t$, there exists $\bar{s},\bar{t}\in \mathcal{F}\upharpoonright Y$ where $s\cup w^t_s\sqsubseteq \bar{s}$, $t\sqsubseteq \bar{t}$ and $f(\bar{s})=f(\bar{t})$. Consider the coloring $c:[t,X]_{|t|+1}\to 2$ defined by 
\begin{equation*}
c(t\cup v)=
\begin{cases}
1 & \text{ if } X \text{ mixes } t\cup v \text{ with } s\cup w^t_s + v', v'\in \langle X/w_s^t \rangle ,\\
0 & \text{ otherwise.}
\end{cases}
\end{equation*}

There is $X_0\leq X$ so that $c\upharpoonright [t,X_0]_{|t|+1}=1$. By a similar coloring we get a further reduct $X_1\leq X_0$, $w^p_t\in p\setminus t$, so that every $p\cup v\in [p,X_1]_{|p|+1}$ is mixed by $X_1$ with 
$t\cup w_t + v'$, for $v'\in \langle X_1/w_t^p\rangle$. Then $X_1$ mixes $s$ with $p$. If $X_1$ mixes, but doesn't mixes weakly, $p$ with $s$, then we would have that for every $v\in \langle X_1/p \rangle $ there exists $v', v'' \langle X_1/p \rangle $ so that $X_1$ mixes $p\cup v$ with $s\cup v'$ and $p\cup v$ with $t\cup w^p_t+v''$. As a result $X$ mixes $s\cup v'$ with $t\cup w^p_t+v''$, a contradiction. Therefore there exists $w^p_s\in p\setminus s$ where for every $v\in \langle X_1/p \rangle $, there exists $v', v'' \in \langle X_1/p \rangle $, so that $X_1$ mixes $p\cup v$ with $s\cup w^p_s+v'$ and $p\cup v$ with $t\cup w^p_t+v''$. This implies that $w^t_s\subseteq w^p_s$ and as a result $w^t_s\subseteq p\setminus s$. 

Now in the case that $X$ mixes, but not weakly, $t$ with $p$, then $X$ mixes $s$ with $p$. If $s$ and $p$ are mixed by $X$, not weakly, then we can assume that for every $v\in \langle X_1/p \rangle $, there exists $v',v''\langle X_1/p \rangle $ so that $X$ mixes $p\cup v$ with both $t\cup v''$ and $s\cup v'$, contradicting that $s$, $t$ are weakly mixed. Therefore there exists $w^p_s\in p\setminus s$ so that for every $v\in \langle X_1/p \rangle $, there exists $v', v'' \in \langle X_1/p \rangle $, where $X_1$ mixes $p\cup v$ with $s\cup w^p_s+v'$ and $p\cup v$ with $t\cup v''$. As a result $X_1$ mixes $s\cup w^p_s+v'$ with $t\cup v''$ as well. This implies that $w_s^t\subseteq w_s^p\subseteq p\setminus s$.
\end{proof}
                 
       The above claim shows the following. Let $s,t\in \hat{ \mathcal{F}}\setminus \mathcal{F}$, $|s|=|t|=n$, so that are weakly mixed by $X$, i.e. for every $Y\leq X$ there are extensions $s\cup w_s^t \sqsubset \bar{s}$ and $t\sqsubseteq \bar{t}$, $\bar{s},\bar{t}\in \mathcal{F}\upharpoonright Y$, so that $f(\bar{s})=f(\bar{t})$. Any $p\in \hat{ \mathcal{F}}\setminus \mathcal{F}$, $|p|=n$, where there exists $p\sqsubseteq \bar{p}\in \mathcal{F}\upharpoonright X$ with $f(\bar{s})=f(\bar{t})=f(\bar{p})$, is so that $w_s^t\subseteq p\setminus s$ as well. Observe that $w_s^t$ is in the second part of the tuple $s\cup v$, but on the first part of $\bar{p}$ and $\bar{t}$. As a consequence $f$ factors through addition. Therefore transitivity is ruined if and only if $f$ factors through addition.

  Define $\phi$ in this case as follows. 
  
  \begin{equation} \phi(t_0,\dots, t_{d-1})=  (g_0(T^{i_0}t_{i_0}+\dots + T^{i_{l_0}}t_{i_{l_0}}), \dots, g_{n-1}(T^{j_0}t_{j_0}+ \dots + T^{j_{l_{n-1}}}t_{j_{l_{n-1}}})), \end{equation}
  
  where $(g_i)_{i\in n}\in \mathcal{F}$, $n<d-1$ and we require that if $l_h=0$, for some $h<n$, then $i_{0}=0$. Obviously $\phi(t)\subseteq (T^{i_0}t_{i_0}+\dots + T^{i_l}t_{i_{l_0}}, \dots, T^{j_0}t_{j_0}+ \dots + T^{j_m}t_{j_{l_{m-1}}})$. Notice that the arguments in Lemmas $2--6$ hold identically with this $\phi$ as well.

  Then $(2)$ covers the case where the mixing is not transitive. As a result the proof of Theorem $2$ is complete.


\begin{thebibliography}{5}
      \bibitem[Ab]{Ab} Caninical equivalence relations on nets of $PS_{c_0}$, Discrete Math.307 (2007) 2943-2978.
  \bibitem[Do-To]{Do-To}  A Ramsey-classification theorem and its application in the Tukey theory of ultrafilters, Transactions of AMS volume 366, number 3 1659-1684.
    \bibitem[Er-Ra]{Er-Ra} P.Erd\"os and R. Rado, A combinatorial theorem, J. London Math. Soc. 25 (1950) 249-255.
    \bibitem[Gow]{Gow} W.T.Gowers, Lipschitz functions on classical spaces, Europ. Jour. Combinatorics, 13, 141--152, (1992).
      \bibitem[Ke]{Ke}  A.S. Kechris, Classical Descriptive Set Theory, Springer--Verlag.
     \bibitem[Mil]{Mil} K.R. Milliken, A Ramsey Theorem for trees, Journal of Combinatorial Theory, Series A. 26(1979) pp. 215-237.
          \bibitem[Vli]{Vli} A canonical  partition theorem for uniform families of finite strong subtrees, Discrete Math. 335 (2014), 45--65; MR34248799.
            \bibitem[To]{To} Introduction to Ramsey spaces, Annals of Mathematics studies, vol. 174, Princeton University Press, NJ, 2010.
             \bibitem[Pu-Ro]{Pu-Ro} P. Pudl\'ack and V. R\"odl, Partition theorems for systems of finite subsets of integers. Discrete Math. , 39(1):67-73, 1982.
              \bibitem[Ra]{Ra}  F. P. Ramsey, On a problem of Formal Logic, Proc. London Math Society Ser. 230 (1929), pp. 264-286.
                       
              \end{thebibliography}
     \end{document}